\numberwithin{equation}{section}
\def \cI{{\mathcal I}}
\def \cP{{\mathscr P}}
\def \cR{{\mathcal R}}
\def \diag{\mathrm{diag\, }}
\def\qand{\quad \text{and}\quad}
\def\Diff{\mathrm{Diff}}
\def\A{\mathbb A}
\def\P{\mathbb P}
\def\C{\mathbb C}
\def\R{\mathbb R}
\def\N{\mathbb N}
\def\T{\mathbb T}
\def\D{\mathbb D}
\def\bS{\mathbb S}
\def\Z{\mathbb Z}
\def\cO{\mathcal O}
\def\det{\text{det}}
\def\exp{\text{exp}}
\def\Re{\text{Re}}
\def\leb{\mathrm{Leb}}
\def\Leb{\mathrm{Leb}}
\newtheorem{proposition}{Proposition}[section]
\newtheorem{theorem}[proposition]{Theorem}
\newtheorem{lemma}[proposition] {Lemma}
\newtheorem{theo}{Theorem}
\newtheorem{corollary}[theo]{Corollary}
\newtheorem{conjecture}[proposition]{Conjecture}
\theoremstyle{remark}
\newtheorem{remark}[proposition]{Remark}
\renewcommand*{\backref}[1]{}
\renewcommand*{\backrefalt}[4]{  \tiny 
  \ifcase #1 (\textbf{NOT CITED.})%
  \or    (Cited on p.~#2.)%
  \else   (Cited on p.~#2.)%
  \fi}
\begin{document}

\title{Coexistence of chaotic and elliptic behaviors among analytic,  symplectic diffeomorphisms of any surface}

\author{Pierre Berger\footnote{Partially  supported  by  the  ERC  project  818737  \emph{Emergence  of  wild  differentiable  dynamical systems.}}
}
\maketitle

\begin{abstract}
We show the coexistence of chaotic  behaviors (positive metric entropy) and elliptic  behaviors (integrable elliptic islands) among  analytic, symplectic diffeomorphisms in many  isotopy classes of any closed surface.  In particular this solves a problem introduced by F. Przytycki (1982).
%\begin{center} \bf R\'esum\'e\end{center}
%
%Nous montrons la coexistence de comportements chaotique (entropie m\' etrique positive) et elliptique (\^ilot elliptique int\'egrable) parmi les diff\'eomorphismes analytiques symplectiques dans de nombreuses classes d'isotopies et toute surface ferm\'ee.  En particulier nous r\'esolvons un probl\`eme introduit par F. Przytycki (1982).
%
\end{abstract}
%\tableofcontents

\begin{theo}[Main result]\label{main}
For every analytic, symplectic and closed surface  $(S,\Omega)$, 
there is 
% any  isotopy class of $S$ is represented by  
a symplectic, analytic map $f\in \Diff^\omega_\Omega (S)$  such that: 
\begin{enumerate}
 \item $f$ has positive metric entropy,
 \item $f$ displays elliptic islands.
% \item $f$ is homotopic to $I$ the identity.  
\end{enumerate}
\end{theo}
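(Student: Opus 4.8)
The plan is to build $f$ with two dynamically decoupled zones on $S$: a zone carrying a uniformly hyperbolic invariant set $\Lambda$ of positive area (a ``fat horseshoe''), responsible for chaos, and a disjoint zone on which $f$ is analytically conjugate to an integrable monotone twist map of an annulus, which will be the (integrable) elliptic island. The reason this is delicate is exactly that $f$ must be analytic: one cannot localise either feature by a bump-function perturbation, so the construction has to be global, and the ``fattening'' of the horseshoe must be compatible with analytic rigidity. The reduction of item (1) to a fat hyperbolic set is soft: if $f\in\Diff^\omega_\Omega(S)$ has a compact invariant uniformly hyperbolic set $\Lambda$ with $\mu(\Lambda)>0$, $\mu$ the normalised area, then $\mu_\Lambda:=\mu(\,\cdot\,\cap\Lambda)/\mu(\Lambda)$ is an $f$-invariant probability measure absolutely continuous with respect to $\mu$; uniform hyperbolicity bounds its positive Lyapunov exponent below by some $c>0$, so the Pesin entropy formula (valid since $f$ is $C^\omega$ and $\mu_\Lambda\ll\mu$) gives $h_{\mu_\Lambda}(f)=\int\lambda^+\,d\mu_\Lambda\ge c>0$, and affinity of metric entropy together with the invariance of $\Lambda$ and $S\setminus\Lambda$ yields $h_\mu(f)\ge\mu(\Lambda)\,h_{\mu_\Lambda}(f)>0$. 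So (1) holds no matter what $f$ does elsewhere.

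The elliptic island is the easy zone. In an analytically embedded annulus $A\simeq\bS^1\times(a,b)$ with its standard area form I would take $f$ to be, in action--angle coordinates $(\theta,I)$, the map $(\theta,I)\mapsto(\theta+\alpha(I),I)$ with $\alpha$ real-analytic and $\alpha'\ne 0$: this is an honest integrable twist map, so $A$ (or a disc cap around an elliptic periodic orbit, if a disc model is preferred) is an integrable elliptic island. The only real constraint is global: this formula must be the restriction of one analytic symplectic diffeomorphism of $S$, and the two boundary circles of $A$ must be genuine invariant curves of $f$; this is handled in the assembly step.

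The core of the proof, and what I expect to be \textbf{the main obstacle}, is producing the fat hyperbolic set \emph{analytically}. A naive self-similar (affine, two-branch) horseshoe loses a fixed proportion of its area at each Markov step and therefore has measure zero; one must engineer the combinatorics so that the relative ``holes'' removed along an orbit are summable, which for a map that is ``the same at all scales'' is exactly what analyticity seems to forbid. The way around this is to use a blender/parablender mechanism — the analogue of the fundamental property of parablenders proved later — in a one-parameter analytic family $(f_t)$: one arranges that a hyperbolic set carries, robustly in $t$, extra spreading directions, and then shows that for a positive-measure set of parameters $t$ (in particular for some explicit $t$) the associated invariant set has positive Lebesgue measure, the point being that the parablender controls analytic distortion at every scale while letting the per-scale loss of measure tend to zero. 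Concretely one may start from an analytic symplectic model already possessing positive metric entropy — a perturbation of a hyperbolic automorphism on $\T^2$, a real-analytic area-preserving Bernoulli-type model on a surface of higher genus, or such a piece embedded in a subsurface — and upgrade its hyperbolic set to a positive-area, \emph{robust} one by the same device, so that it survives the coexistence construction. The bulk of the technical work, and the novelty, lies in this step: obtaining, analytically, a uniformly hyperbolic set of positive Lebesgue measure and controlling its persistence.

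Finally, one assembles the example on an arbitrary $(S,\Omega)$. By Moser's theorem it suffices, for each genus, to treat one area form up to a positive scalar, so fix two disjoint analytically embedded annuli (or discs) $Z_{ell}$ and $Z_{hyp}$ in $S$, of total area less than $\Omega(S)$ after rescaling the models, with the integrable twist placed on $Z_{ell}$ and the fat-horseshoe model placed on $Z_{hyp}$. One then glues these into a single $f\in\Diff^\omega_\Omega(S)$ by composing with the time-one map of an analytic Hamiltonian that is analytically small away from $Z_{ell}\cup Z_{hyp}$ and is chosen so that the bounding circles of $Z_{ell}$ stay invariant and the hyperbolic set $\Lambda\subset\mathrm{int}\,Z_{hyp}$ persists — using that invariant curves and uniformly hyperbolic sets are stable under small analytic perturbations, plus an explicit choice of Hamiltonian whose analytic ``support'' avoids the two zones. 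Then (1) follows from the first and third paragraphs and (2) from the second, giving the coexistence for every analytic symplectic closed surface.
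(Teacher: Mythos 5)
Your plan rests on two steps that cannot work as stated, and both fail for reasons the paper explicitly identifies as the obstructions it is designed to circumvent.

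First, the ``fat horseshoe.''  You want a uniformly hyperbolic invariant set $\Lambda$ of positive Lebesgue measure sitting inside a proper subregion $Z_{hyp}$ of $S$.  For a $C^{1+\alpha}$ (a fortiori $C^\omega$) surface diffeomorphism, this is impossible: by a theorem of Bowen, a locally maximal transitive hyperbolic set of positive Lebesgue measure must have nonempty interior and, by transitivity, must be the whole manifold (so the map is Anosov).  So there is no analytic, or even $C^2$, ``fat horseshoe'' strictly contained in a zone with complementary elliptic behaviour.  The parablender mechanism you invoke does not get around this; parablenders produce robust tangencies and abundance of sinks, not positive-measure uniformly hyperbolic sets.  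In fact, the positive metric entropy in the paper comes from \emph{non-uniform} hyperbolicity in the sense of Pesin, obtained by starting from a genuine linear Anosov automorphism of $\T^2$ (which is globally hyperbolic) and destroying uniformity at finitely many points by symplectic blow-up \emph{\`a la} Przytycki: the Lyapunov exponent stays positive a.e.\ even though no proper uniformly hyperbolic set of positive measure survives.

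Second, the assembly step.  You propose to glue the two zones by composing with the time-one map of an analytic Hamiltonian that is ``analytically small away from $Z_{ell}\cup Z_{hyp}$.''  This is precisely the analytic bump-function problem the paper singles out as the reason the $C^\infty$ constructions of Katok and Przytycki do not transfer: there is no real-analytic function that is small off a prescribed open set and substantial on it, because analyticity is rigid.  Requiring invariant curves to ``stay invariant'' under an ambient analytic perturbation is an infinite-codimension condition, and there is no reason such a perturbation exists.  The paper's actual solution is a surgery, not a gluing of two dynamics by a cutoff.  It exploits the explicit Moser normal form for the Hamiltonian near the boundary circles of the blown-up, Katok-quotiented sphere, extends the surface \emph{and} the dynamics analytically across those boundaries into collars, performs a further analytic surgery on the collars (identifying an outgoing half of an unstable separatrix with an incoming half of a stable one), and finally blows down the resulting rotation-invariant circles to create non-degenerate elliptic fixed points.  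The elliptic islands are thus produced \emph{inside} the analytic continuation of the chaotic dynamics, bounded by heteroclinic bi-links, rather than implanted in a disjoint zone and then glued in.  For higher genus the construction proceeds by capping one, two, or four holes to get a pair of pants, an annulus, or a sphere, and assembling surfaces along collars where the explicit Hamiltonian is known, again with no cutoff.

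In short, your reduction of item (1) to a positive-measure uniformly hyperbolic set is correct as soft dynamics but its hypothesis is unrealizable, and your assembly step requires an analytic bump function that does not exist; the paper's proof avoids both by starting from a global Anosov map, passing to non-uniform hyperbolicity via blow-up and quotient, and manufacturing the elliptic islands by surgery and blow-down inside the analytic continuation of the collar.
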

A symplectic form $\Omega$ on an oriented surface is a nowhere-vanishing volume form. This defines a smooth measure $\Leb$ on $S$.    A mapping $f$ of $(S, \Omega)$ is \emph{symplectic} if it leaves the volume form $\Omega$  invariant. 
This is equivalent to say that it is orientation preserving and leaves $\leb$ invariant.  Then for $\Leb$  a.e. point $x\in S$ the limit $\Lambda(x):= \lim_{n\to \infty} \frac1n \log \|D_xf^n\|$ exists. The \emph{metric entropy} of $f$ is the mean of $\Lambda$.  Hence a dynamics has \emph{positive entropy} if  it is exponentially sensitive to the initial conditions with positive probability. 
An \emph{elliptic island} is a domain bounded by a smooth, invariant  curve on which the dynamics acts as an irrational rotation. There are many numerical experiments mentioning the coexistence these two phenomena for sympletic, analytic mappings, however so far no example was proved.

\begin{remark}
In the proof of \cref{main}, we will show moreover that $S$ without the support of $\Lambda$ is integrable: the dynamics is equal to the time one of a Hamiltonian flow. \end{remark}

\section{Introduction}
\subsection{History of the problem} 
This problem enjoys a long history. The first examples of mappings with positive entropy on any surface were discovered by Katok \cite{Ka79}. These examples are isotopic to  the identity. 
Then  Katok and Gerbert  \cite{GK82} obtained mappings with positive entropy on any surface in the  isotopy class of any pseudo-Anosov map.  Both constructions were
 smooth but not analytic. In \cite{Ge85}, Gerbert constructed real analytic symplectic pseudo-Anosov maps on any surface, which display positive metric entropy but not the coexistence with an elliptic island.  In  \cite{Pr82}, Przytycki built an example of conservative diffeomorphism of the torus with coexistence of an invariant region with positive entropy and an elliptic island. His construction was  infinitely smooth and not analytic. He addressed the problem of whether his construction could be generalized in the analytic class 
\cite[Rk1, P461]{Pr82}.  The issue of this problem was recalled as unclear by Liverani  in \cite[Rk 2.4 P3]{Li04} where a bifurcation of  Przytycki's example was studied. Note that \cref{main} solves in particular Przytycki's problem. 

In \cite{Go12},  Gorodetski proved that typical  examples of analytic symplectic surface maps are such that $\Lambda$ is positive on a set of  maximal Hausdorff dimension  ($=2$) and this coexists with   elliptic islands. However this leaves open a strong version of the positive entropy conjecture which asserts that ``a typical sympletic dynamics has positive metric entropy'' ($\Lambda$ is positive on a set of positive Lebesgue measure). A weaker   version of the positive entropy conjecture proposed by Herman \cite{He98} asserts the existence of  symplectic mappings $C^\infty$-close to the identity on the disk with positive metric entropy; it implies the density of surface maps with positive metric entropy among those with an elliptic cycle.   In \cite{BT19}, the Herman's positive entropy conjecture was proved with Turaev. Our proof used a  quotient similar to   the examples of Katok and Przytycki. 
During Katok's memorial conference 2019, in a conversation with Gorodetski and Kleptsyn,  I claimed that the  construction of \cite{BT19} should be useful to prove the following analytic counterpart of Herman's  positive entropy conjecture \cite{He98} and  even the next  analytic counterpart of  our main result with Turaev.
\begin{conjecture}\label{HeConj}
There exists an analytic and symplectic  perturbation of the identity of the disk with positive metric entropy.
\end{conjecture}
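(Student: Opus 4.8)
The plan is to prove the real-analytic analogue of the Herman positive-entropy conjecture by transplanting to $\Diff^\omega_\Omega(\D)$ the $C^\infty$ construction carried out with Turaev in \cite{BT19}. Recall the shape of that construction: one perturbs, symplectically and by an arbitrarily $C^\infty$-small amount, an integrable model on the disk possessing a hyperbolic fixed point with a homoclinic loop --- the pendulum picture, which is also the local picture at the slowed-down fixed point of a Katok map \cite{Ka79,Pr82} --- and one tunes the perturbation so that the resulting stochastic layer along the now-split separatrix carries a hyperbolic set of \emph{positive} Lebesgue measure; Katok's bookkeeping then upgrades this into positive metric entropy, $\int\Lambda\,d\Leb>0$. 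The first step of my proof would be to set up the analytic counterpart of this model: a symplectic analytic perturbation $f_\varepsilon$ of the identity, realized as the time-one map of a small analytic, time-periodic Hamiltonian isotopy, with the homoclinic tangle of a hyperbolic fixed point under control. Verifying that $f_\varepsilon$ is $C^\infty$-close to the identity is the cheap part --- the time-one map of an isotopy is $C^r$-close to the identity, quantitatively in terms of the $C^{r+1}$-size of the generating Hamiltonian, and the flow of an analytic time-periodic Hamiltonian is analytic. So the entire difficulty lies in producing, within the analytic class, a positive-measure hyperbolic set inside an exponentially thin stochastic layer.

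The heart of the matter is the extraction of that positive-measure hyperbolicity, and here the rigidity of analytic maps is the central obstacle --- indeed the reason the problem has stayed open. Every known construction of positive-\emph{metric}-entropy surface maps (Katok \cite{Ka79}, Przytycki \cite{Pr82}, and \cite{BT19}) relies at some point on cut-and-paste --- being literally equal to a prescribed normal form on an open set, whether for the slow-down near a fixed point or for the gluing of rescaled copies across a sequence of scales --- which is impossible for a non-constant analytic map. The remedy I would use is to replace the rigid hyperbolic pieces by \emph{blenders}: the defining robust intersection property of a blender with a lamination is $C^r$-open, hence it survives the unavoidable discrepancy between the ideal normal form and the analytic map one can actually realize. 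Because the mechanism must be iterated along infinitely many scales (or along a continuum of energy levels), one needs the \emph{parametrized} version --- a \emph{parablender} --- whose fundamental property guarantees the robust intersections \emph{uniformly in the scale/energy parameter}, and uniformly over a positive-measure set of the internal parameters coding the hyperbolic set; this is exactly what makes the union of the resulting hyperbolic sets carry positive Lebesgue measure instead of being a measure-zero horseshoe. Establishing the fundamental property of these parablenders and deriving the measure estimate from it is where the real work is; I expect the delicate points to be the distortion control needed to upgrade "$C^r$-robust intersection" to "positive Lebesgue measure", and the analytic realizability of a family of maps carrying the prescribed blender geometry.

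Granting such an $f_\varepsilon$ --- analytic, symplectic, $C^\infty$-close to the identity, with $\Lambda>0$ on a set of positive Lebesgue measure --- its metric entropy, the integral of $\Lambda$ against $\Leb$, is positive, proving the conjecture. I stress that this does not follow formally from \cref{main}: that theorem concerns closed surfaces and, moreover, delivers honest zero-entropy elliptic islands around their centers, so merely restricting or rescaling near a fixed point would only recover the near-integrable local dynamics there and never the chaotic part; one genuinely needs the disk version of the construction, with the positive-measure hyperbolic set recurring at arbitrarily small scales near the parabolic fixed point.
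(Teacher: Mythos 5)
This statement is explicitly labeled a \emph{Conjecture} in the paper (\cref{HeConj}) and is \emph{not} proved there. The author presents it as a still-open target, suggested during Katok's memorial conference, and states that the paper's results --- \cref{coroAprime,coro isotopy} and the main theorem --- constitute only ``a first step toward'' proving \cref{HeConj,main2sd}, namely the construction of an analytic counterpart of Przytycki's example. The machinery actually deployed in the paper starts from a linear Anosov map of $\T^2$, performs symplectic blow-ups at fixed points, quotients by the involution $-id$, and then recaps the resulting holes with integrable disks; the resulting diffeomorphisms are nowhere near the identity in any $C^r$-norm, and no claim is made that the method produces a small perturbation of $id_{\D}$. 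So there is no paper proof to compare against, and your final paragraph correctly identifies that \cref{main} does not yield \cref{HeConj} by restriction or rescaling.

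Your proposal, on the other hand, is a research program rather than a proof. The two paragraphs in the middle are a faithful high-level description of the strategy of \cite{BT19} transplanted to the analytic setting, but the central step --- producing, near the identity and inside the analytic category, a positive-Lebesgue-measure hyperbolic set via parablenders, and proving the fundamental property of those parablenders with the required uniformity across scales --- is precisely the open difficulty, and you explicitly defer it (``this is where the real work is''; ``I expect the delicate points to be \dots''). Nothing in the proposal addresses how the cut-and-paste surgeries of \cite{Ka79,Pr82,BT19}, which you rightly note are inherently non-analytic, would actually be replaced: invoking blenders for $C^r$-robustness of an intersection does not by itself resolve the realizability problem (an analytic, exactly symplectic, $C^\infty$-small map with a prescribed blender geometry spanning a full range of energy levels near a parabolic point), nor the passage from robust heteroclinic intersections to a set of positive Lebesgue measure with uniform nonzero Lyapunov exponent. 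So as submitted this is a plausible sketch with a genuine gap at its heart, and the paper you are comparing against likewise leaves that gap open --- intentionally, since the statement is labeled a conjecture.
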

%And also useful to prove the  analytic counterpart of  the main result of \cite{BT19}:
\begin{conjecture}\label{main2sd}
For every analytic and closed  symplectic surface  $(S,\Omega)$, for every analytic and symplectic  $f\in  \Diff^\omega_\Omega (S)$ which displays an elliptic periodic point, there are analytic and conservative  perturbations of $f$ with positive metric entropy.
\end{conjecture}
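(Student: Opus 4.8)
\medskip
\noindent\emph{Sketch of a possible approach to \cref{main2sd}.}
The plan is to reduce the global density statement to a local problem near the elliptic orbit, and then to solve that local problem by an analytic, localized version of the construction of \cite{BT19} underlying the present paper --- essentially \cref{HeConj}. Let $f\in\Diff^\omega_\Omega(S)$ have an elliptic periodic point $p$ of period $k$, with orbit $p=p_0,\dots,p_{k-1}$. After an arbitrarily small preliminary analytic symplectic perturbation of $f$ we may assume that the linearization of $f^k$ at $p$ is a rotation by a Diophantine angle and that $f^k$ satisfies the twist condition at $p$; then, by R\"ussmann's analytic KAM theorem, $f^k$ admits a positive-measure nest of analytic invariant circles, and in particular an $f^k$-invariant closed annulus $A$ bounded by two such circles $C_1,C_2$ and contained in an arbitrarily small disk $B_0\ni p_0$ on which $f^k$ is controlled by uniform analytic estimates inherited from the fixed holomorphic extension of $f$ to a complex neighborhood of $S$.

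The reduction rests on the following remark. Although analytic maps are never compactly supported, an analytic Hamiltonian that is constant on $C_1\cup C_2$ and exponentially small --- of size $O(e^{-1/\rho})$ --- outside a $\rho$-neighborhood of $A$ generates an analytic symplectic $\phi\in\Diff^\omega_\Omega(S)$ that preserves $A$, coincides up to an exponentially small error with a prescribed symplectic map of $A$, and differs from the identity by $O(e^{-1/\rho})$ off any fixed neighborhood of $A$. Setting $g:=\phi\circ f$ produces an analytic symplectic perturbation of $f$; since $f$ cyclically permutes small disks $B_0\to B_1\to\dots\to B_{k-1}\to B_0$ and $\phi$ is localized in $A\subset B_0$, the $k$-th return map of $g$ to $A$ equals $\phi\circ f^k|_A$ up to an exponentially small error coming from the tails of $\phi$. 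Because the metric entropy of $g$ is $\tfrac1k$ that of $g^k$ (the top Lyapunov exponent of $g^k$ is $k$ times that of $g$, and both preserve $\Leb$), it suffices to choose $\phi$ so that $\phi\circ f^k|_A$ has positive metric entropy on $A$ through a mechanism that is robust under exponentially small analytic perturbations.

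It remains to realize such a $\phi$. Rescaling the thin invariant annulus $A$ to the fixed annulus $\T\times[0,1]$, the map $f^k|_A$ becomes, in renormalized analytic coordinates, $C^\omega$-close on a unit-size complex neighborhood to a rigid rotation $R_\omega$, the twist contribution being small because $A$ is thin and $\omega$ (the rotation number of $C_1$) is Diophantine. We are thus reduced to constructing, for every $\varepsilon>0$, an analytic symplectic map of $\T\times[0,1]$ that is $\varepsilon$-$C^\omega$-close to $R_\omega$, equals $R_\omega$ near the boundary, and has positive metric entropy on a compact subset --- this entropy being carried by an open, uniformly hyperbolic ``blender''-type sub-structure away from the slow-down region, persistent under all small analytic perturbations, in the spirit of the fundamental property of the parablenders proved in this paper. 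Up to absorbing the rotation $R_\omega$ (either by a further renormalization or by running the construction around $R_\omega$ directly), this is precisely \cref{HeConj}, which should follow by performing the Katok--Przytycki slow-down \cite{Ka79,Pr82} of the present paper close to the identity rather than in a prescribed isotopy class.

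The hard part is concentrated in this last step. One must carry out the slow-down with an \emph{analytic} profile rather than a $C^\infty$ cut-off while keeping the map $C^\omega$-close to a rotation, and --- since positive metric entropy is not a priori an open condition --- arrange that the positive Lyapunov exponents are produced by a robust, open mechanism, so that they survive both the exponentially small analytic tails of $\phi$ (through which $A$ fails to be exactly invariant for $g$) and the error in the reduction above; making the tails genuinely negligible against the scale of the implanted mechanism requires a careful choice of the various fixed scales (width of holomorphic extension, thinness of $A$, size of the blender), possibly after a further preliminary perturbation widening the holomorphic extension of $f$. This robustness is exactly what the (para)blender technology and the transversality estimates of this paper are designed to supply; granting the resulting robust analytic near-identity construction, the reduction above upgrades it from the single examples furnished by \cref{main} to the density statement of \cref{main2sd}.
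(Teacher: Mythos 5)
This statement is labeled a \emph{Conjecture} in the paper and is explicitly left open: the author notes only that a first step toward Conjectures~\ref{HeConj} and~\ref{main2sd} is the analytic version of Przytycki's example, which is what \cref{coroAprime} and \cref{coro isotopy} supply, and that \cref{main2sd} remains unproved. So there is no paper proof to compare yours against.

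Your sketch is a reasonable outline of the intended strategy (localize near a KAM annulus around the elliptic orbit, implant an analytic near-identity slow-down there, argue the resulting mechanism is robust), but as you yourself flag, it bottoms out in exactly \cref{HeConj}, which is also stated as an open conjecture in the paper. Reducing an open conjecture to another open conjecture is not a proof, and the reduction itself has unaddressed difficulties that the paper's machinery does not currently handle. Concretely: (i) since there is no analytic bump function, the ``exponentially small tail'' Hamiltonian $\phi$ cannot make $A$ exactly $g$-invariant, so you must show that positive metric entropy of $\phi\circ f^k|_A$ survives both the loss of invariance of $A$ and the tails; but positive metric entropy is not known to be an open condition, and the robustness results actually established in this paper (\cref{KPAP}) are conditional on the \emph{persistence of the heteroclinic bi-links}, which is a codimension-infinity constraint --- precisely the obstruction the paper's blow-up/quotient/blow-down construction is designed to circumvent, and which your localized perturbation would reintroduce; (ii) the claim that the renormalized return map to a thin Diophantine annulus is $C^\omega$-close to a rigid rotation $R_\omega$ in a \emph{unit-size} complex strip is not justified --- the holomorphic extension width degenerates as the annulus thins, and controlling this is exactly the kind of quantitative analytic issue that makes \cref{HeConj} hard; (iii) ``parablender'' technology is invoked but does not appear in this paper at all --- \cite{BT19} uses a quotient construction \`a la Katok--Przytycki, and its robustness statement is the link-persistence one above, not an open blender condition. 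In short, the steps you acknowledge as ``the hard part'' are the entire content of the conjecture, and the surrounding reduction, while sensible as a roadmap, does not reduce the difficulty.
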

For the analogous strategy\footnote{For an introduction to the proof of \cite{BT19}, one could look at Arnaud's Bourbaki Seminar \cite{Ar19}.} of  \cite{BT19},  a first step toward the proof of Conjectures \ref{HeConj} and \ref{main2sd} is to prove the analytic counterpart of Przytycki's example. 
 
Following Gorodetski this step was not on reach in a short time, and  I bit with him the existence of such an example in a short time \footnote{More precisely the bit was that  someone would prove  within five years  the existence of an analytic symplectomorphism of the torus, isotopic to the identity,  with positive metric entropy and displaying an elliptic island.}. \cref{coroAprime,coro isotopy} solve this step: 
\begin{corollary}\label{coroAprime}
There exists an analytic and symplectic diffeomorphism $f$ of the closed disk displaying a stochastic island bounded  by  four heteroclinic bi-links which is robust  relative  link  preservation. \end{corollary}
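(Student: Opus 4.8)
The plan is to realise $f$ as an explicit analytic, symplectic building block on the closed disk, in the spirit of the quotient constructions of Katok \cite{Ka79} and Przytycki \cite{Pr82}, but carried out in the analytic category by means of the blender/renormalisation machinery developed in the previous sections. The target object coincides, outside an invariant topological quadrilateral $Q\subset\mathbb D$, with the time-one map of an analytic Hamiltonian, hence is integrable there (in accordance with the Remark following \cref{main}), while on $Q$ it carries an invariant subset of positive Lebesgue measure on which $\Lambda>0$; the closure of $Q$ is then the announced stochastic island.

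First I would build the boundary configuration. Fix an analytic, symplectic diffeomorphism $g$ of $\mathbb D$, given as the time-one map of an analytic Hamiltonian, possessing four hyperbolic fixed points $p_1,\dots,p_4$ arranged cyclically so that, for every $i$ modulo $4$, one branch of the unstable manifold of $p_i$ coincides with one branch of the stable manifold of $p_{i+1}$. The four resulting saddle connections close up into a Jordan curve bounding a topological quadrilateral $Q\subset\mathbb D$ invariant by $g$; these four connections, each being simultaneously an unstable and a stable separatrix, are the \emph{heteroclinic bi-links} of the statement, and there are four of them. One arranges in addition that $g$ is analytically linearisable near each $p_i$ and integrable on $\mathbb D\setminus Q$.

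The core step is to modify $g$ inside $Q$ only, producing an analytic, symplectic map $f$ which agrees with $g$ to infinite order along $\partial Q$ (so that the four bi-links, the four saddles and the exterior integrable region are untouched) and such that $f|_Q$ is semiconjugate, through an analytic coordinate change degenerating along $\partial Q$, to a power of a hyperbolic model — a linear Anosov automorphism realised on a suitable branched quotient, in the style of Katok's construction, whose four branch points correspond to the $p_i$. In the $C^\infty$ category this is precisely Przytycki's patching; the novelty, and what I expect to be the main obstacle, is the \emph{simultaneously analytic and symplectic} realisation of this surgery, together with the control of the return map near the four degenerate boundary saddles, where hyperbolicity is slowed down but not destroyed. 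This is exactly what the constructions of the preceding sections are designed to provide; it upgrades the full Hausdorff-dimension statement of \cite{Go12} to a set of \emph{positive Lebesgue measure} with $\Lambda>0$, in the spirit of \cite{BT19}, and taking the mean of $\Lambda$ then gives positive metric entropy on the island.

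Finally, for robustness relative link preservation: let $\tilde f$ be a small analytic, symplectic perturbation of $f$ still exhibiting the four heteroclinic bi-links, that is, keeping $p_1,\dots,p_4$ hyperbolic with the same cyclic pattern of coinciding separatrices. Then $\tilde f$ still leaves invariant the quadrilateral $\tilde Q$ bounded by these connections, and the positive–entropy mechanism inside $Q$ is carried by a robust object — a blender, respectively a non-uniformly hyperbolic core whose defining conditions are open — and therefore persists under the perturbation. Hence $\tilde f$ again displays a stochastic island bounded by the four bi-links; this is the asserted robustness and, together with the previous steps, proves the corollary.
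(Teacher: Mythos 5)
Your proposal runs into exactly the obstruction that the paper is designed to circumvent, and on a crucial point it is not repairable.  You propose to take an integrable analytic, symplectic $g$ on the disk, with four saddles and four saddle connections bounding a quadrilateral $Q$, and then to ``modify $g$ inside $Q$ only, producing an analytic, symplectic map $f$ which agrees with $g$ to infinite order along $\partial Q$.''  In the analytic category this forces $f\equiv g$: if $f-g$ is real-analytic and vanishes to infinite order along the one-dimensional curve $\partial Q$, then by analytic continuation $f-g$ vanishes on the whole connected disk.  This is precisely the remark in \textsection 1.2 that ``there is no analytic bump function,'' and it is why Przytycki's $C^\infty$ patching does not carry over verbatim.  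The paper's way around it is the opposite of localisation: it never cuts and re-glues the \emph{dynamics} on a fixed surface, but instead extends the punctured surface itself by symplectic blow-up, quotient, collar extension, re-gluing, and blow-down, so that one single globally defined analytic Hamiltonian flow provides the map near the boundary, and the stochastic part is already there, inherited from the Anosov map through the blow-up/quotient construction of \textsection\ref{S1}--\ref{S2}.  So the direction is reversed: one starts with the chaotic core and caps its holes with integrable disks (\textsection\ref{S3}--\ref{S5}), rather than starting with an integrable map and inserting chaos.

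Two further discrepancies are worth noting.  First, the geometry of the island is not the one you describe: in the paper the image $\cI$ of $\bS^*$ in $\D$ is a disk with three holes, whose boundary is a disjoint union of four \emph{circles}, each circle being one bi-link joining its own pair of saddles (eight saddles in total); it is not a single Jordan quadrilateral through four saddles.  Second, the robustness claim is not a generic ``blenders are open'' statement; it is the specific \cref{KPAP} (quoted from \cite{BT19}): if a $C^2$-perturbation preserves the four bi-links as circles, then the region they bound is still a stochastic island.  Your sketch would need to cite or reprove that result; openness of some non-uniformly hyperbolic core does not by itself give positivity of $\Lambda$ on a set of \emph{full} Lebesgue measure in the perturbed region.
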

Let us explain the meaning of the above statement.  We recall that a \emph{stochastic island} is a domain $\cI$ on which $\Lambda$ is positive $\leb$-a.e.   A \emph{bi-link} $C$ is a smooth circle equal to the union of two heteroclinic links $C= W^u(P)\cup \{Q\}= W^s(Q)\cup  \{P\}$ between saddle fixed points  $P$ and $Q$, see \cref{fig:island} \cpageref{fig:island}.  
Given a perturbation of the dynamics, the \emph{bi-link persists} if the union of the stable and unstable manifolds of the fixed points continue to form a smooth circle.  The island is \emph{robust  relative  link  preservation} if for every $C^2$-perturbation such that each of the bi-links persist, the domain bounded by the continuation of these bi-links is still a stochastic island. 
\medskip 
 
We can wonder also what are the isotopy classes of analytic, symplectic surface mappings which display coexistence phenomena. Our techniques enable (at least) to obtain the following:
 \begin{corollary}\label{coro isotopy}
For every analytic and closed symplectic surface  $(S,\Omega)$, 
for any isotopy class $C$, if
\begin{itemize}
\item $S$ is the $2$-torus for  the  isotopy class $C$  of the identity,
\item or $S$ is a surface of genus $\ge 0$ and $C$ is the isotopy  class of a pseudo-Anosov map of $S$,
\end{itemize}
then   there is a symplectic, analytic map $f\in \Diff^\omega_\Omega (S)$  of isotopy class  $C$,   
 such that $f$ has positive metric entropy and  displays elliptic islands. %If $S$ is the torus, then the map $f$ given by \cref{main} can be chosen isotopic to the identity. 
\end{corollary}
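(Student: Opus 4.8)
\noindent\emph{Strategy.} In both cases the plan is to embed the disk model of \cref{coroAprime} into $(S,\Omega)$ and to fill in its complement by an analytic symplectic map realizing the prescribed isotopy class $C$, in such a way that the two pieces glue together real-analytically. The one analytic fact making such a gluing possible is: two analytic symplectic maps that coincide, on a common open collar of the gluing circle, with the time-one map of a fixed analytic Hamiltonian patch together into an analytic symplectic map. Accordingly, the starting point is that the disk model $g\in\Diff^\omega_\Omega(\D)$ of \cref{coroAprime} may be taken to coincide, on an annular collar $A$ of $\partial\D$, with the time-one map $\phi_{H_0}$ of the flow of a real-analytic radial Hamiltonian $H_0$ (whose level sets in $A$ are the circles around $\partial\D$), and moreover that $H_0$ is the restriction of a real-analytic function defined near $\overline A$ that extends over the surface into which $\D$ will be glued; this integrable boundary structure is part of what is arranged in the proof of \cref{main} (cf.\ the Remark following it). Granting $f$ built this way, its required properties come for free: the implanted copy of the stochastic island $\cI\subset\D$ has positive $\Leb$-measure with $\Lambda>0$ $\Leb$-a.e.\ on it — here one uses that the four bi-links bounding $\cI$ persist under the gluing, so that the robustness ``relative link preservation'' of \cref{coroAprime} applies — whence the mean of $\Lambda$ over $S$, i.e.\ the metric entropy, is positive; and $f$ agrees with the time-one map of an analytic Hamiltonian flow on a region containing a non-isochronous centre, hence possesses an integrable domain bounded by an invariant circle of irrational rotation number, that is, an elliptic island.

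\noindent\emph{The identity isotopy class of the $2$-torus.} Fix an analytic symplectic embedding $\iota\colon\D\hookrightarrow\T^2$ onto a closed subdisk and extend $\iota_*H_0$ to a real-analytic function $H$ on $\T^2$ which still equals $\iota_*H_0$ near $\iota(\partial\D)$ and which has a non-degenerate, non-isochronous local extremum at some $q\in\T^2\setminus\iota(\D)$ — possible since $\T^2\setminus\iota(\inta\D)$ is a compact real-analytic surface with boundary along which $\iota_*H_0$ is non-critical. Let $\phi$ be the time-one map of the Hamiltonian flow of $H$ and set $f:=\iota\circ g\circ\iota^{-1}$ on $\iota(\D)$ and $f:=\phi$ on $\T^2\setminus\iota(\inta\D)$. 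Since $g=\phi_{H_0}$ on $A$, the two definitions agree on $\iota(A)$, where both equal $\phi$, so $f\in\Diff^\omega_\Omega(\T^2)$; near $\iota(q)$ it is an integrable non-isochronous centre, so it displays an elliptic island, and it has positive metric entropy by the previous paragraph. Finally, after shrinking $A$ so that $g$ and $\phi$ both preserve the inner boundary circle of $\iota(A)$, one has $f=\phi$ outside the disk bounded by that circle; thus $f\circ\phi^{-1}$ is an orientation-preserving diffeomorphism supported in a disk, hence isotopic to the identity, while $\phi$ is isotopic to the identity along its defining flow. Therefore $f$ is isotopic to the identity of $\T^2$.

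\noindent\emph{The pseudo-Anosov isotopy classes.} For $S$ the sphere there is no pseudo-Anosov class, so assume genus $\ge 1$ and start from an analytic symplectic representative $\phi_0\in\Diff^\omega_\Omega(S)$ of $C$ with positive metric entropy: for the $2$-torus take the linear Anosov automorphism, for genus $\ge 2$ the analytic symplectic pseudo-Anosov diffeomorphisms of Gerbert \cite{Ge85}; these are Anosov off a finite set, so $\Lambda>0$ $\Leb$-a.e. One then grafts the disk model of \cref{coroAprime} into $\phi_0$ across the boundary of a small disk $\D_0$ around a suitable periodic point, after an analytic symplectic change of coordinates matching the germ of $\phi_0$ there with the collar germ of $g$; the graft alters $\phi_0$ only near $\D_0$, so the resulting $f$ still represents $C$, its positive metric entropy persists — carried by the implanted stochastic island, whose bi-links are left intact so that \cref{coroAprime} applies — and an integrable elliptic island is produced in the transition annulus interpolating between the pseudo-Anosov behaviour and the disk model.

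\noindent\emph{Main obstacle.} The delicate step is precisely this last graft. Because an analytic map that is integrable on an open set is integrable everywhere, one cannot merely excise a disk and drop in an elliptic island: the analytic map must interpolate, on a fixed annulus, between the hyperbolic germ of the pseudo-Anosov model and the integrable collar germ of $g$, while staying symplectic and while being confined to a region so small that the $\Lambda>0$ set survives and the bi-links of \cref{coroAprime} — hence its robust stochastic island — are untouched. Constructing this analytic symplectic interpolation with compatible boundary germs, and checking it changes neither the isotopy class nor the positivity of the metric entropy, is where the real work lies; the identity case is easy exactly because there the complement is filled by a genuine Hamiltonian flow.
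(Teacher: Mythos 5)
Your high-level strategy — reuse the paper's constructions, glue analytically, verify entropy, elliptic island and isotopy class — is the right spirit, but the implementation has concrete errors and a step you explicitly leave undone, and the paper's actual proof is structurally different in both cases.

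\textbf{Wrong boundary germ.} The premise that the disk model $f^\D$ of \cref{coroAprime} ``coincides on an annular collar $A$ of $\partial\D$ with the time-one map of a radial Hamiltonian $H_0$ whose level sets in $A$ are circles around $\partial\D$'' is false. In the paper's construction $\partial\D$ is the last uncapped hole of $\bS^*$, and a collar of it carries the Hamiltonian $H(\theta,r)=\tfrac\lambda\pi\,r\sin(2\pi\theta)$ of \cref{def:Hbis}: its zero level is the boundary circle together with the separatrices of the saddles $(0,0)$ and $(1/2,0)$, while each nonzero level $\{r\sin(2\pi\theta)=\text{const}\}$ is an open arc exiting the collar. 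The boundary of $\D$ is a heteroclinic bi-link, not an invariant circle of rotations, so the ``fill the complement with a centre'' scheme does not apply along $\partial\D$ as stated.

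\textbf{Unjustified real-analytic extension.} Even if you instead cut around an invariant circle deep inside one of the caps (where the dynamics is genuinely a rotation), you then ask for a real-analytic $H$ on the complementary one-holed torus extending a prescribed collar germ. Such extensions of a fixed analytic germ to a prescribed manifold are not automatic, and this is precisely the difficulty the whole paper is designed to sidestep by doing explicit symplectic surgeries (blow-up, quotient, gluing by Hamiltonian flow maps, blow-down) instead of invoking extension. The paper's torus case is built differently: it takes two copies $\A_1$, $\A_2$ of the annulus $\A$ (the sphere $\bS^*$ with two holes capped), endows them with $f^*$ and $f^{*-1}$, and glues both pairs of boundaries via $(\theta_1,r_1\pm1)\sim(-\theta_2,r_2\mp1)$; the twist $\theta\mapsto-\theta$ is forced by $-H(\theta,\cdot)=H(-\theta,\cdot)$, and this orientation-reversing double produces a torus on which the induced map is isotopic to the identity. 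No analytic extension over a complement is ever needed.

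\textbf{The pseudo-Anosov case is left unproved.} You concede in your ``Main obstacle'' paragraph that building the analytic symplectic interpolation between the pseudo-Anosov germ and the collar germ of the disk model ``is where the real work lies'' — and you do not do it. That is the gap. The paper avoids the interpolation problem entirely: since the analytic pseudo-Anosov representative of Gerber already has positive metric entropy, nothing needs to be grafted in from outside; the only missing feature is the elliptic island. The paper therefore blows up a hyperbolic periodic orbit \emph{of the pseudo-Anosov map itself} with positive eigenvalues, uses the Moser normal form \cite{Mo56}, and applies the generalized cap \cref{generalized cap,Generalized cap} to refill the blown-up hole with an integrable disk carrying $k+1$ elliptic points, all manufactured from that same local analytic germ. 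Because the cap is produced from the map's own germ rather than imported, there is no interpolation to build; the modification is supported in small disks around the periodic orbit, so the isotopy class is unchanged, and the entropy of the pseudo-Anosov map was positive to begin with.
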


A natural problem would be to realize any isotopy class of surface diffeomorphisms by an analytic and symplectic dynamics displaying 
coexistence of positive metric entropy and elliptic islands. It seems that the techniques of this work together with the Nielsen-Thurston's classification of symplectic dynamics on  surface should lead to a solution of this problem. Another approach would be to prove \cref{main2sd} which would imply immediately a solution to the latter problem.

The proof of    \cref{main} is here completely self contained. 
%which implies \cref{coroAprime} and the particular case of \cref{coro isotopy} for the torus map isotopic to identity (Gorodeski's problem).% I believe \cref{coroAprime} useful to prove the analytic counterpart of Herman's  positive entropy \cref{HeConj} and \cref{main2sd}. 

\bigskip
\emph{I am grateful to A. Gorodetski and V. Kleptsyn for their  encouragements. I am thankful to  R. Krikorian and P. Le Calvez for nice conversations. I thank S. Biebler for his careful reading.  }

 \subsection{Idea  and structure  of the proof}
 All the proofs \cite{Ka79,Pr82,GK82,Li04,BT19} used bump functions to localize the surgery of  the dynamics in a subset of the manifold.  We recall that there is no analytic bump function. 
To deal with the analytic case, Gebert \cite{Ge85} showed that the pseudo-Anosov examples  of \cite{GK82} persist in a finite co-dimensional submanifold which must intersect the (infinite-dimensional) submanifold of analytic maps. However the examples of  \cite{Pr82,Li04,BT19}, 
displaying the sought coexistence,  persist actually along an infinite codimensional submanifold: one have to keep intact  heteroclinic links, and I do not see how to do this if the unperturbed map displaying the coexistence is not already analytic...
% analytic class if there are none analytic models. 
  Instead we propose another approach: 
  \begin{center}\emph{ 
We construct an analytic and symplectic extension of the surface punctured by several points, so that the extended surface remains diffeomorphic to the unpunctured surface, and the analytic continuation of the dynamics on the extended surface displays  elliptic islands.} 
  \end{center}
  \begin{figure}[h]
\begin{center}
\includegraphics[width=13cm]{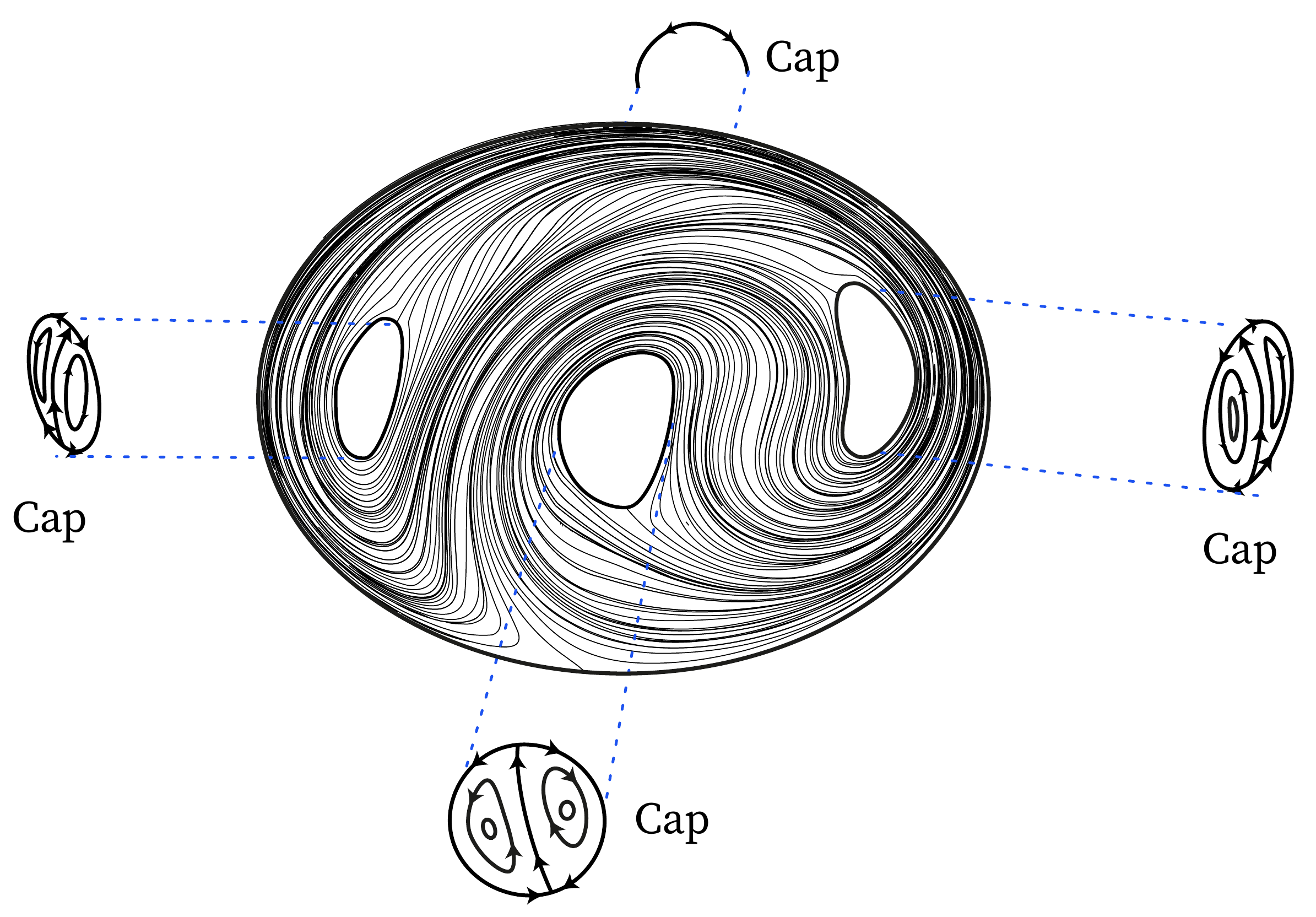}
\caption{Analytic and conservative dynamics on a sphere displaying coexistence of a stochastic region with elliptic islands.} \label{global }
\end{center}
\end{figure}
We will start with an analytic, conservative dynamics with positive entropy and then we will perform blow-up, quotient,  blow-down and connected sums, so that the analytic continuation of the dynamics is well defined after these operations and   displays the sought coexistence properties. 

For the proof of \cref{main}, we will start in  \textsection\ref{S1}  with a linear  Anosov map on the 2-torus, then we blow-up four of its fixed point \emph{\`a la} Przytycki  to define an analytic symplectic diffeomorphism of the 2-torus $\T^*$ without four disks, then we quotient it \emph{\`a la}  Katok to define  an analytic symplectic diffeomorphism of the 2-sphere $\bS^*$ without four disks in \textsection\ref{S2}. These steps were already performed in \cite{BT19} and are depicted in \cref{surgery Anosov}. Then we propose a new construction. 

First we regard the continuation of this  dynamics on an analytic extension $\hat \bS^*$ of  $\bS^*$ in \textsection\ref{S3}. Each component of  $\hat \bS^*\setminus  \bS^*$ is a \emph{collar}. This collar is diffeomorphic   to an annulus and equal to a halve neighborhood of two heteroclinic links.
In \textsection\ref{S4}  we glue two pieces of this annulus to obtain a collar which is a disk without two holes bounded by circle rotations, see \cref{cap surgery}.  
 In \textsection\ref{S5} we blow-down them to obtain a collar which is a disk containing two elliptic islands (the dynamics is actually integrable on the whole disk). Such are called \emph{cap's dynamics} on the disk.  This forms \emph{a cap} to recap any hole of the  sphere $\bS^*$ with  four holes.

This allows in \textsection\ref{S6} to prove the main theorem and the corollaries of its proof. In \textsection\ref{S61}, we start by proving \cref{main} when the surface is a sphere; the construction is depicted by \cref{global }. Following the number of recaped holes, coexistence phenomena are obtained on a  disk (which 
contains the stochastic island of \cref{coroAprime}), a cylinder or a pair of pants. The boundary of these can be glued together to form any closed symplectic surface, and so obtain \cref{main}. A careful study enables to obtain an analytic, sympletic  diffeomorphism of the torus isotopic to the identity, as wondered by Gorodetski and part of \cref{coro isotopy}. 

In \textsection\ref{SS4}, we prove the remaining part of \cref{coro isotopy} regarding surface mappings isotopic to a pseudo-Anosov map. 
  We will start   with the example of analytic pseudo-Anosov map of  \cite{Ge85}, which can represent any isoptopy class of orientiation preserving pseudo-Anosov maps (see also \cite{GK82}).  Then the punctured surface will be extended   following basically the same steps as in \textsection\ref{S3}.  The only difference is that the normal form \cite{Mo56} at the  saddle points is more general and that we will be working on a lifting of the previous construction. Caps will be replaced by a certain  \emph{generalized cap} given by \cref{generalized cap,Generalized cap}. The proof of the lemma follows the same lines as \textsection\ref{S4}-\ref{S5}. The Proposition  enables to  recap the surface with holes given by blowing up any periodic saddle orbit. The Lemma  enables to bound any cycle of heteroclinic links by a disk on which the dynamics is analytic and  integrable.

\section{Caps for  spheres with four holes} 
\subsection{A non-uniformly hyperbolic map on the torus  without four disks}\label{S1}
This step is depicted in \cref{surgery Anosov}[left-center]. 
\begin{figure}[h]
\begin{center}
\includegraphics[width=17cm]{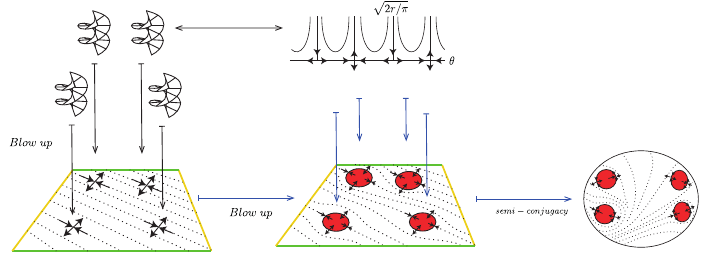}
\caption{Surgery on an Anosov map} \label{surgery Anosov}
\end{center}
\end{figure}

We start with the Anosov map $A(x,y)= (13\cdot x+8\cdot y, 8\cdot x+5\cdot y)$   which acts on the torus $\T^2:= \R^2/\Z^2$ endowed with the symplectic form $\Omega=dx\wedge dy$.  Let $R\in O_2(\R)$ and $\lambda>0$ be such that $A= R\times  \diag (\exp (\lambda), \exp(-\lambda)) \times R^{-1}$. 
The set $\cP:= \{0,  (1/2, 0) ,  (0,1/2),(1/2,1/2)\}$ is formed by four fixed points of the  Anosov map~$A$. We perform a symplectic and analytic  blow-up at each $P\in \cP$. 
Let $\epsilon>0$ be small.  The $\sqrt{\tfrac2\pi \epsilon}$-neighborhood $P+\D(\sqrt{\tfrac2\pi \epsilon})$ of $P\in \R^2/\Z^2$ is blown up to an annulus via the map:
\[\pi_P : (\theta,r)\in \R/2\Z \times [0, \epsilon  ] \mapsto P+R\times (\sqrt{\tfrac{2r}\pi} \cos (\pi \theta), \sqrt{\tfrac{2r}\pi} \sin (\pi \theta))\in P+\D(\sqrt{\tfrac{2 \epsilon}\pi}) \; .\]
These blow-ups are symplectic and analytic; they define a   new surface $\T^*$ as :
\[\T^*:= \left(\T^2\setminus \cP\right) \sqcup ( \cP \times \R/2\Z \times [0, \epsilon ])/\sim\]
with $\sim$ the equivalence relation spanned by:
\[\left(  z_1\in\T^2\setminus \cP \sim (P, z_2) \in   \cP \times \R/2\Z \times [0, \epsilon ]  \right)\Longleftrightarrow z_1=\pi_P(z_2) \; .\]
The surface $\T^*$ is analytic and symplectic; it is a torus without four disks. 
Also $A$ can be lifted to $\T^*$ as the map $A^*$ whose restriction to $\T^2\setminus \cP$ is $A$ and whose restriction to  a neighborhood of $ \cP \times \R/2\Z \times \{0\}$ is the time $1$ map of the Hamiltonian:
\begin{equation}\label{def:H}   H: (\theta,r)\mapsto     \tfrac{\lambda}\pi\cdot  r\cdot  \sin(2\pi \theta).\end{equation}
Indeed nearby each fixed point in $\cP$, in the coordinate induced by $R$, the map $A$ is the time one of the flow of the Hamiltonian  $H_1(x,y)=  \lambda \cdot x\cdot y= \lambda \cdot  
\sqrt{\tfrac{2r}\pi} \cos (\pi \theta)\cdot  \sqrt{\tfrac{2r}\pi} \sin (\pi \theta)=H(\theta,r)$.
%, from which 
% Thus in the coordinate $(\theta,r)$ of the neighborhood $ \Z/2Z\times [0,\epsilon]$ of the boundary of $\T^*$, the map $A^*$ is the time one of the flow of the Hamiltonian $H_0(x,y)=  \lambda \cdot \tfrac{r}\pi \sin(2\pi \theta)$. 
\subsection{A non-uniformly hyperbolic map on the sphere  without four disks}\label{S2}

Note that $A$ is equivariant by the involution $-id$ on $\T^2$. 
The action of the involution on $\T^2\setminus \cP$ is free and $-id$ fixes each point in $\cP$.  
The involution $-id$ lifts to $\T^*$ as the involution  $J$ defined by:
\[ J| \T^2\setminus \cP= -id| \T^2\setminus \cP\qand J|\cP \times  \R/2\Z \times [0, \epsilon ]: (P,\theta,r)\mapsto (P,\theta+1,r)\; .
\]
 Let $\bS^* $ be the space  $\T^*$ quotiented by the involution $J$. This step is depicted in \cref{surgery Anosov}[center-right].  
Note that  $\bS^*$ is an analytic sphere without four disks. A neighborhood of the boundary of these four  holes is canonically parametrized by $\cP \times  \R/2\Z \times [0, \epsilon ]/J= \cP \times  \R/\Z \times [0, \epsilon ]$. 
 Also the  associated projection  $\pi^* : \T^* \to \bS^*$  is a $2$-covering. Since the symplectic form $\Omega$ is equivariant by $-id$, we can endow $\bS^*$ with the push forward of $\Omega$ by $\pi^*$ that we still denote by $\Omega$.  We notice that the dynamics $A^*$ descends to an analytic and symplectic dynamics $f^*$ on $\bS^*$. In other words, there is $f^*\in \Diff^\omega_\Omega (\bS^*)$ such that:
\[f^* \circ \pi^* = \pi^*\circ A^*\; .\]
  As   $A|\T^2\setminus \cP$ is a 2-covering of   $f^*| \bS^*\setminus \partial \bS^*$, the map  $f^*$ has positive metric  entropy. 
  
  \medskip 
  
  Now we shall embed the surface $\bS^*$ via a symplectic and analytic map   into a sphere so that the dynamics can be extended  to one which is analytic, symplectic and displays non-degenerates elliptic points.  As depicted in \cref{cap surgery}, this will be done first by implementing an explicit formula for the collar lemma, so that the each hole can be identified to the interior of a disk endowed with a dynamics at the neighborhood of the boundary. On the boundary it lies two saddle points; we will perform a surgery to glue a segment of the unstable branch of one to a segment of a stable branch of the other, so that the dynamics is extended to the disk without two disks, and finally we will blow-down each of these latter two disks to create two non-degenerated elliptic points. %The difficulty of this construction is to define surgeries  which are both analytic and symplectic. To overcome this difficulty we will give an  explicit construction. 

\subsection{Explicit collar lemma: the holes are surrounded by heteroclinic links}\label{S3}
  Let us now precise the dynamics of $f^*$ at the boundary of $\bS^*$.   By \cref{def:H}, nearby each component, the dynamics $f^*$  is equal to the time $1$ of the flow of the following  Hamiltonian: 
  \begin{equation}\label{def:Hbis}H: (\theta, r)\in \R/\Z \times [0, \epsilon )\mapsto 
    \tfrac \lambda \pi\cdot  r\cdot  \sin(2\pi \theta).\end{equation}
Observe that $\Omega= d\theta\wedge dr$ and  $H$ extend canonically to a neighborhood $V$ of $\R/\Z \times\{0\}$ in $\R/\Z \times\R$. Let $W\subset V$ be a neighborhood of $\R/\Z\times \{0\}$ in $V$ such that the  time $t=1$ of the Hamiltonian flow $\phi^t_H:W\to V$ of $H$ is well defined.  For $\pm\in \{-, +\}$, let  $V^\pm :=V\cap \R/\Z\times \R_\pm$  and  $W^\pm :=W\cap \R/\Z\times \R_\pm$. 
  On the boundary $\partial W^-=\partial W^+= \R/\Z \times\{0\}$, the map $\phi^{1}_H$ displays two saddle fixed points $Q=(0,0)$ and $Q':=(1/2, 0)$, so that $W^s(Q)\setminus \{Q\}=W^u(Q')\setminus \{Q'\}= \partial W \setminus \{Q,Q'\}$. 
 In particular $\partial W^\pm$ is a bi-link. 
 
Note  that $\bigsqcup_{\cP} W^+$ and $\bigsqcup_{\cP} V^+$ are neighborhoods of $\partial \bS^*$ in $\bS^*$. So we can extend  the surface $\bS^*$ by gluing  canonically   $ {\cP}\times  V$ of $V$ at  $ {\cP}\times  V^+\subset  \bS^*$. This defines an open surface $\hat \bS^*$  which contains $\bS^*$ and such that $\hat \bS^*\setminus int\, \bS^* $ is equal to $\bigsqcup_{\cP} V^-$.  On a neighborhood of $\bS^*$ in $\hat \bS^*$, the map $f^*$ extends analytically to a map denoted by $\hat f^*$ and whose restriction to $\cP\times W^-$ is  $\phi^{1}_H$.

\subsection{From holes surrounded by heteroclinic links to holes  surrounded by rotations }\label{S4}
The idea is to shape $W^-$ as in   \cref{cap surgery} [left] to perform  the surgery depicted in \cref{cap surgery} [left-center]. 
 \begin{figure}[h]
\begin{center}
\includegraphics[width=17cm]{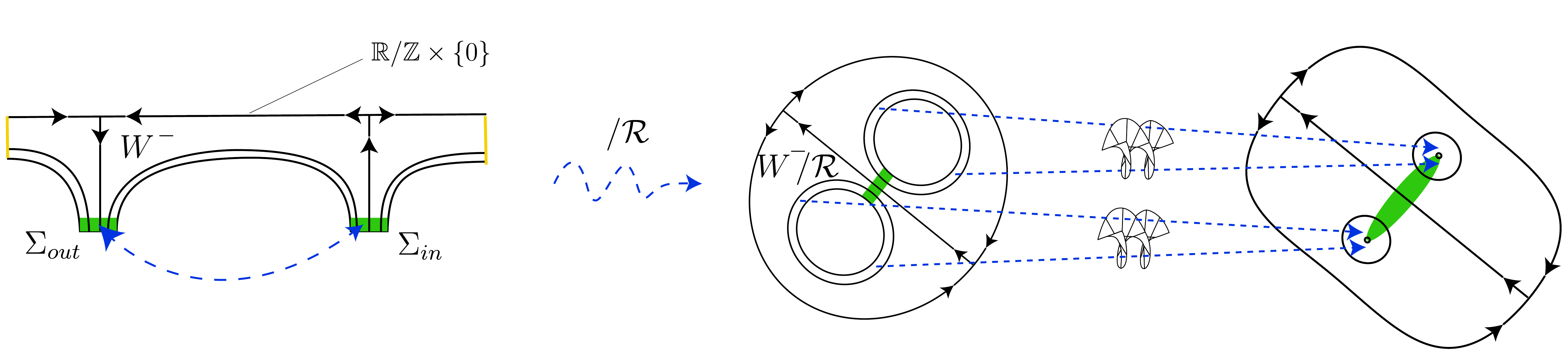}
\caption{Making an integrable cap  by gluing the green rectangles together and then blowing down.} \label{cap surgery}
\end{center}
\end{figure}

Recall that $H$ extends to $V\supset W$.  For $\eta>0$ small, we can shrink $W$ to have  $W^-$ of the form:
\[W^-:= \{(\theta,r)\in \R/\Z\times 
[-\eta, 0]:
|H(\theta,r)|\le \eta^3 \}\; .\] 
By \cref{def:Hbis},  the boundary of $W^-$ is formed by three curves;  the union of two of them is:
\[\Sigma:=\{\theta\in \R/\Z: 
|H(\theta,-\eta)|\le \eta^2 \}\times \{-\eta\}\]
Let $\Sigma_{out}$ be the component of $\Sigma$  which intersects $W^u_\eta(Q)=\{0\}\times [-\eta,0]$. Let $\Sigma_{in}$ be the component of $\Sigma$  which intersects 
$W^s_\eta(Q')=\{\tfrac12 \}\times [-\eta,0]$.  To perform the surgery depicted in \cref{cap surgery} [left-center],  we define:
\[\phi:   \bigcup_{t\in [-1,0]} \phi_H^t(\Sigma_{out})\to  \bigcup_{t\in [0,1]} \phi_H^t(\Sigma_{in})\; ,\]
such that for $(\theta,r)\in \Sigma_{out}$, $\phi(\theta,r)= \phi_{H}^{1}(\tfrac12-\theta,r)$ and for $t\in [0,1]$, 
$\phi\circ  \phi^{t-1}_H(r,\theta)= \phi^{t}_H(\tfrac12-\theta,r)$. 

Note that the map $\phi$   is  analytic and  symplectic with range $\bigcup_{t\in [0,1]} \phi_H^t(\Sigma_{in})$. Moreover it respects $H$.   
  Let $W^-/\cR $ be equal to $W^-$ quotiented by the equivalence relation $\cR$  spanned by:
  \[\forall 
 (r,\theta)\in  \bigcup_{t\in [-1,0]} \phi_H^t(\Sigma_{out}),
\text{ and }
  (r',\theta')\in  \bigcup_{t\in [0,1]} \phi_H^t(\Sigma_{in}),
 \quad  (r,\theta)\cR  (r',\theta')\quad  \text{iff } ( r,  \theta)= \phi( r', \theta').\] 
  As $\phi$ is an analytic diffeomorphism which
sends the intersection of its domain with $\partial W^-\setminus  \Sigma$  into $\partial W^-\setminus  \Sigma$,  the space  $W^-/\cR$  is an analytic surface with boundary. Moreover, we notice that $W^-/\cR$ is equal to the closed  disk $\bar \D$ without two  disks $\D_+$  and $\D_-$ , as depicted in \cref{cap surgery} [center]:
\[W^-/\cR=  \bar \D \setminus (\D_+ \sqcup \D_-)\]
 This surgery respects the symplectic form $\Omega$ and the Hamiltonian $H$, which remains analytic on $W^-/\cR$. Hence they push-forward  to a symplectic form and a Hamiltonian still denoted by $\Omega$ and $H$.  Note also that the analytic continuation of $f$ on  $W^-/\cR$ is still equal to the time one map of the Hamiltonian $H$.   
\subsection{Blowing down holes surrounded by rotations}\label{S5}
Now we would like to   blow-down the circles  $ (\partial \D_- \sqcup  \partial \D_+)$ to fixed points $P_-$  and $P_+$.   These blow-downs will construct a pair of disks depicted in  \cref{cap surgery} [Center-Right].
  In order to do so, we first observe that on $  \partial \D_+\sqcup \partial \D_-$, the function $H$ is constant (equal to resp. $\eta^3$ and $-\eta^3$) and its   symplectic gradient does not vanish. So we can apply the classical action-angle coordinate change:
\begin{lemma}\label{euclidean coordinate}Let  $V_0$ be a neighborhood $V_0$ of  $C=\R/\Z\times \{0\}$ in $\R/\Z\times \R^+$ and $H: V_0\to \R$ be an analytic  Hamiltonian constant on $C$  and whose differential does not vanish on $C$. 
%Let $H$ be a Hamiltonian on a neighborhood $V_0$ of  $C=\R/\Z\times \{0\}$ in $\R/\Z\times \R^+$ 
%such that on $C$ the function $H$ is constant and its differential does not vanish.
Then there are $\delta>0$, a neighborhood $V'_0\subset V_0$ of 
$C$  and an analytic and symplectic map $\psi: V'_0
\to \R/\Z\times [0,\delta]$ so that $H\circ \psi^{-1}$ 
  sends the orbit of each point of $V'_0$  to a horizontal circle $\R/\Z\times \{\rho\}$.
\end{lemma}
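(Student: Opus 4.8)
\emph{The plan} is to carry out explicitly the classical action--angle construction for the one--degree--of--freedom system $H$ near the regular periodic orbit $C$, checking at each stage that the objects produced are analytic (which they are, being built from the flow of the analytic Hamiltonian vector field $X_H$, $\iota_{X_H}\Omega = dH$, and from integrals of analytic forms). Write $c_0 := H|_C$. Since $H$ is constant on $C$, $dH$ annihilates $TC$, hence so does $\iota_{X_H}\Omega$; as $\Omega$ is non-degenerate on a surface, $(TC)^{\perp_\Omega} = TC$ and $X_H$ is tangent to $C$. It is nowhere zero there because $dH\neq 0$ on $C$, so $X_H|_C$ is a non-vanishing vector field on the circle $C$ and its flow $\phi_H^t$ is periodic of some positive period. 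Moreover $dH\neq 0$ on $C$ makes $H$ a submersion near $C$; by the implicit function theorem and compactness of $C$, for $c$ in a one-sided interval $[c_0, c_0 + a)$ — replacing $H$ by $-H$ if needed — the level set $C_c := \{H = c\}$ is a smooth circle close to $C$, and the $C_c$ foliate a collar $V'_0 \subset V_0$ of $C$. Each $C_c$ is $\phi_H$-invariant with periodic restricted flow of period $T(c)$, and $c \mapsto T(c)$ is analytic (it is the return time of $\phi_H$ to a fixed analytic arc transverse to $X_H$).

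\emph{The radial coordinate} is the action: set $\rho := A(H)$, where $A(c)$ is the $\Omega$-area of the sub-collar of $V'_0$ bounded by $C$ and $C_c$, so $A(c_0) = 0$ and $A$ is analytic. Writing locally $\Omega = \beta\wedge dH$ for a $1$-form $\beta$, Fubini gives $A(c) = \int_{c_0}^{c}\big(\oint_{C_{c'}}\beta\big)\,dc'$; the identity $\iota_{X_H}\Omega = dH$ forces $\beta(X_H) = 1$ along each $C_c$, so parametrizing $C_c$ by $\phi_H$-time yields $\oint_{C_c}\beta = T(c)$, hence $A'(c) = T(c) > 0$. Therefore $A$ is an analytic diffeomorphism from $[c_0, c_0+a)$ onto some $[0,\delta)$, and $\rho = A(H)$ is an analytic function vanishing on $C$, constant on every orbit, with $d\rho = T(H)\,dH$.

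\emph{The angular coordinate:} fix an analytic section $\sigma$ of the foliation $\{C_c\}$ (an analytic arc meeting each $C_c$ once, transverse to $X_H$) and let $\vartheta : V'_0 \to \R/\Z$ send $z \in C_c$ to $\tfrac1{T(c)}$ times the $\phi_H$-travel time from $\sigma\cap C_c$ to $z$, taken mod $1$; this is well defined and analytic, and $\psi := (\vartheta,\rho)$ is an analytic diffeomorphism of $V'_0$ onto $\R/\Z\times[0,\delta)$. By construction $X_H$ pushes $\vartheta$ at the constant rate $1/T(c)$ along $C_c$ and fixes $\rho$, so $\partial_\vartheta = T(H)\,X_H$; hence $\iota_{\partial_\vartheta}\Omega = T(H)\,dH = d\rho$, whereas $\iota_{\partial_\vartheta}(d\vartheta\wedge d\rho) = d\rho$ as well (since $\partial_\vartheta\rho = 0$). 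Thus the $2$-form $\Omega - d\vartheta\wedge d\rho$ is annihilated by the nowhere-zero field $\partial_\vartheta$ on the surface $V'_0$, so it vanishes identically: $\psi$ is symplectic. Shrinking $a$ (hence $\delta$ and $V'_0$) so that the image becomes the \emph{closed} cylinder $\R/\Z\times[0,\delta]$ completes the construction; and for any $z\in V'_0$ the $\phi_H$-orbit through $z$ is all of $C_{H(z)} = \psi^{-1}(\R/\Z\times\{\rho(z)\})$, which $\psi$ carries onto the horizontal circle $\R/\Z\times\{\rho(z)\}$ — equivalently $H\circ\psi^{-1}$ depends only on the second coordinate.

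\emph{Main point.} Everything except one step is the routine analytic dependence of flows and of integrals of analytic data on parameters; the one substantive ingredient is the identity $A'(c) = T(c)$ — that the \emph{symplectic area} enclosed is the correct primitive to use as the radial variable — together with the two-dimensional fact that a $2$-form killed by a non-vanishing vector field vanishes, which is what turns the candidate $(\vartheta,\rho)$ into an honest symplectomorphism.
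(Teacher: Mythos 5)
Your proof is correct and takes essentially the same route as the paper: one--degree--of--freedom action--angle coordinates built from the period function of the periodic Hamiltonian flow near the regular orbit $C$. The one substantive difference is the definition of the radial coordinate. You set $\rho := A(H)$, the enclosed $\Omega$-area, and derive $A'(c)=T(c)$ from Fubini together with $\beta(X_H)=1$; this is the intrinsic action, and your closing observation (a $2$-form on a surface annihilated by a nowhere-vanishing vector field vanishes) cleanly certifies $\Omega=d\vartheta\wedge d\rho$. The paper instead introduces the flow-box map $\psi_0$ to the transversal $\Pi=\{0\}\times[0,\delta']$ and reshapes via $\hat T(0,r)=\int_0^r T(0,x)\,dx$, a primitive of $T|\Pi$ with respect to the ambient section coordinate $r$. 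Those two radial functions agree precisely when $\partial_r H\equiv 1$ along $\Pi$ --- equivalently, when the section is parametrized by the value of $H$ --- because the flow-box coordinates $(t,r)$ pull $\Omega$ back to $(\partial_r H)\,dt\wedge dr$ rather than $dt\wedge dr$. So your version is the one that holds verbatim for an arbitrary analytic transversal, whereas the paper's formula is implicitly reading the transversal variable as $H$ itself (the time-and-energy flow box is Darboux, the time-and-$r$ one is not). Both arguments rest on the same key fact $dA/dH=T$; the paper's flow-box presentation is a bit more explicit and computational, while yours is more intrinsic and sidesteps this section-parametrization subtlety.
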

\begin{proof}[Proof of \cref{euclidean coordinate}]
For $\delta'>0$ small enough, the section   $\Pi:=\{0\}\times [0,\delta']$ is transverse to the orbits of the Hamiltonian  $H$.  Every point $z$ in $\Pi$ is periodic of a period $T(z)$. The union of the orbits of points in $z\in \Pi$ is equal to a neighborhood $V'_0\subset V_0$ of $C$. 
Let us define the following flow box: 
\[\psi_0: V'_0\to \R\times \Pi/\sim \quad \text{with }(t,z)\sim ( t+kT(z),z),\quad k\in \Z\; .\]
In these coordinates the flow $\phi^t_H$  of $H$ is the translation by  $(t,0)$. We shall reshape the range of $\psi_0$ so that it equals $\R/\Z \times [0,\delta]$ with $\delta =\Leb(\R\times \Pi/\sim)$. To this end we consider   a primitive $\hat T$ of $T|\Pi$ which vanishes at $0\in \Pi\cap C$:
  \[\hat T  : (0,r) \in \Pi=\{0\} \times [0,\delta'] \mapsto   \int_0^{r} T(0,x)dx\; . \]
Note that  $\delta:=\hat T(\delta')$. Let  $\zeta:  \R\times \Pi/\!\! \sim\; \to  \R/\Z\times [0,\delta  ]$ be defined by:
\[\zeta(z,t)  =( - t/T (z),\hat T(z))  \; .
\]
We observe that $\zeta$ is analytic and symplectic. Thus $\psi =
 \zeta\circ \psi_0$   is an analytic and symplectic map from $V'_0$  onto $\R/2\Z\times [0,\hat T(\delta) ]$ which sends each orbit to a horizontal circle $\R/\Z\times \{\rho\}$. 
\end{proof}

Thus  for every $\pm\in \{-,+\}$, there exist a neighborhood $V_\pm$ of $\partial\D_\pm$ endowed with analytic and symplectic coordinates $\psi_\pm: V_\pm\to \R/\Z\times [0,\delta]$  such that  $H\circ \psi^{-1}_\pm(\theta,r)= h_\pm(r)$  for an analytic maps $h_\pm$.   So we can perform a blow-down of the circle $\partial \D_\pm$ to a fixed point  $P_\pm$. This blow-down sends $V_\pm$ to a disk of radius $\sqrt{ \delta/\pi}$ and  the dynamics on this disk is the time one map given by the    Hamiltonian $ P_\pm+(x,y)\mapsto h_\pm( x^2+y^2)$, which is indeed integrable and displays an elliptic island.  
 
\section{Application of the construction 
}\label{S6}
\subsection{Proof of \cref{main,coroAprime}}\label{S61}
Steps \textsection\ref{S1} and \textsection\ref{S2} constructed an analytic diffeomorphism $f^*$   on the sphere $\bS^*$ without four holes with positive metric entropy. 
At step \textsection\ref{S3}, we saw that this dynamics could be extended at the neighborhood $\equiv \R/\Z\times (-\epsilon,\epsilon)$ of boundary $\equiv \R/\Z\times \{0\}$  of each of these holes by the time one map  of the Hamiltonian:
\[H(\theta, r)=
  \tfrac \lambda\pi\cdot  r  \cdot \sin(2\pi \theta).\]
 
Steps   \textsection\ref{S4} and \textsection\ref{S5}  constructed a cap for any of the holes of $\bS^*$. This is an  analytic  dynamics $\hat f$  on the disks which displays two elliptic fixed points and which is integrable: it is  the time one map of an Hamiltonian. Moreover this Hamiltonian coincide  at the neighborhood boundary  the disk $\approx \R/\Z\times \{0\}$ with the  Hamiltonian $H$.  As a matter of fact, we can fillup any hole of $\bS^*$ endowed with $f^*$, by a a disk endowed with $\hat f$. 

Let us perform  surgeries with these two objects in order to deduce \cref{main,coroAprime}. 
\begin{proof}[Proof of \cref{main}]
\underline{Case where $S$ is the sphere.}
In the above construction, we recap each of the four  holes of $\bS^*$ with a disk, and we endow $\bS^*$ with $f^*$ and each four disks with the cap's dynamics $\hat f$. We obtained an analytic, sympletic map of the sphere with positive metric entropy and displaying four elliptic islands. 

\underline{Case where $S$ is the torus.}
In the above construction, we fill up two  holes of $\bS^*$  
with two disks, and we endow $\bS^*$ with $f^*$ and each disk with the cap's dynamics $\hat f$. This defines a symplectic  dynamics $f^\A$ of the annulus  $\A$ with positive entropy and four elliptic points (two in each cap).  A neighborhood of $\partial \A$ in $\A$ is 
diffeomorphic to $\bigsqcup_{\pm\in \{+,-\}}\R/\Z\times [\pm 1,\pm (1-\epsilon)]$ and in these coordinates  
%.
%Let us identify $\A$ to $\R/\Z\times [-1,1]$. 
%Also at the neighborhood of each of the components of $\partial\A$  
the dynamics $f^\A$ is the time one of the flow of $H(\theta, r\pm1)$ with:
 \[H(\theta, r\pm1)=   \tfrac \lambda\pi\cdot  r  \cdot \sin(2\pi \theta).\]
 We  glue both boundaries of $\A$ by:
$(\theta,r+1)\sim(\theta, r-1)$  for $r$ small.   This defines an analytic and symplectic map  on the torus   with a priori  non-trivial isotopy class.

\underline{Case where $S$ is a surface of higher genus.}
In the above construction, we recap only one holes of $\bS^*$  
with a disk to form a pair of pants $\P$: a disk with  two holes. We endow $\bS^*$ with $f^*$ and the disk with the cap dynamics $\hat f$. This defines a symplectic  dynamics $f^\P$ on $\P$. We recall that every closed, oriented surface $S$ of genus $\ge 2$ displays a pants decomposition. We glue canonically (as above) the pants at their boundaries to obtain the sought dynamics. 
\end{proof}
\begin{proof}[Proof of \cref{coro isotopy} for $S$ equal to the torus and $f$ isotopic to the identity.] 
We constructed above a symplectic and analytic maps $f^\A$ on the annulus $\A $ such that  at the neighborhood  $\bigsqcup_{\pm\in \{+,-\}}\R/\Z\times [\pm 1,\pm (1-\epsilon)]$ of the boundary   $\partial\A$  the dynamics $f^\A$ is the times one of the flow of $H(\theta, r\pm1)$ with:
 \[H(\theta, r\pm1)=   \tfrac \lambda\pi\cdot  r  \cdot \sin(2\pi \theta).\]
 We saw that if we  glue both boundaries of $\A$ by $(\theta,r+1)\sim(\theta, r-1)$  for $r$ small, the dynamics is \emph{a priori} in a non-trivial isotopy class.  To vanish this isotopy class, the idea is to glue $f^*$ with its inverse $f^{*-1}$.  At the boundary of $\A$, the map $f^{*-1}$ is the time one of the flow of  $-H(\theta, r\pm1)= H(-\theta, r\pm1)$. 
So we glue two copies $\A_1 $ and $\A_2 $
of $\A$ along their respective boundaries  
by $(\theta_1,r_1+1)\sim(-\theta_2, r_2-1)$ and $(\theta_1,r_1-1)\sim(-\theta_2, r_2+1)$ for  $r_1,r_2$ small and any $\theta_1,\theta_2$.  
The space $\hat \A$ obtained is a $2$-covering of a Klein bottle, which is a torus. The dynamics induced by $f^*$ and $f^{*-1}$  on $\hat \A$ is analytic, symplectic and isotopic to the identity.  Moreover the dynamics on  the torus displays the coexistence phenomenon. 
\end{proof}
\begin{proof}[Proof of \cref{coroAprime}]
We first start with the above analytic diffeomorphism $f^*$   on the sphere $\bS^*$ with four holes. We recap three holes with caps and we  endow each of them with the dynamics $\hat f$. This defines an analytic and symplectic map $f^\D$ on the disk $\D$. Note that the disk is not endowed with its standard sympletic form, but using \cite{DM90}, we can analytically conjugate it to one which leaves invariant the standard symplectic form on $\D$. The image $\cI$ of $\bS^*$ in $\D$  is depicted  Fig. \ref{fig:island}. Therein  the Lyapunov exponent function $\Lambda$ is $\leb$ a.e.  equal to a positive constant. 
  \begin{figure}[h!]
	\centering
		\includegraphics[width=5cm]{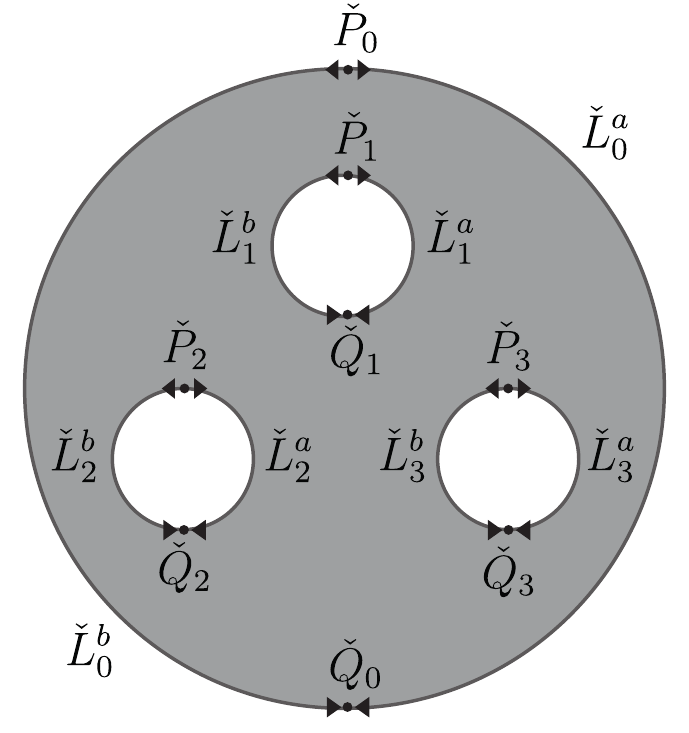}
\caption{Stochastic island $\cal I$ in grey.}\label{fig:island}
\label{fig:oldcoord}
\end{figure}
In the sense of \cite{BT19} (inspired from \cite{Ka79, AP09,Pr82}), the set $\cI$  is called  a \emph{stochastic island}. This means that $\cI$ is  a disk with three holes;  and that the boundary of $\cI$ is formed by four pairs of  heteroclinic  bi-links $\{(\check L_i^a, \check L^b_i): 0\le i\le 3\}$. Each $\check L_i^a\cup \check L_i^b$ is a smooth circle included
in the stable and unstable manifolds of hyperbolic fixed points   $\check P_i$ and $\check Q_i$ respectively:
\[\check L_i^a\cup \check L_i^b \subset   W^u(\check P_i; f^\D)\cup W^s(\check Q_i; f^\D)\; .\]
For every $f$ which is $C^1$-close to $f^\D$, for every $0\le i\le 3$, the \emph{hyperbolic continuations} $P_i$ and $Q_i$ of $\check P_i$ and $\check Q_i$  are  uniquely defined hyperbolic fixed points for $f$.   If $\{W^u(P_i; f)\cup W^s(Q_i;f): 0\le i \le 3\}$ form four heteroclinic bi-links $\{L_i^a\cup  L^b_i: 0\le i\le 3\}$  close to $\{\check L_i^a\cup  \check L^b_i: 0\le i\le 3\}$, then we say that the \emph{bilinks are persistent for the perturbation $f$}.

Then the next proposition implies \cref{coroAprime}.  
\end{proof}
\begin{proposition}[{\cite[prop. 2.1]{BT19}}]\label{KPAP}
For every conservative map $f$ which is $C^2$-close to $f^\D$ and for which the bi-links are persistent, then the continuations of these bilinks bound a stochastic island. In particular, the metric entropy of $f$ is positive.  
\end{proposition}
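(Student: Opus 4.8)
The plan is to turn $C^2$-closeness, together with the persistence of the four bi-links, into a \emph{quantitative} form of non-uniform hyperbolicity on the region they enclose. Write $\cI_f$ for the open disk with three holes bounded by the circles $L_i^a\cup L_i^b\subset W^u(P_i;f)\cup W^s(Q_i;f)$, $0\le i\le 3$; since each bi-link is invariant as a set and $f$ is an orientation-preserving homeomorphism, $\cI_f$ is $f$-invariant. The goal is then to show that $\Lambda(x)>0$ for $\Leb$-almost every $x\in\cI_f$. Granting this, $\cI_f$ is a stochastic island by definition, and since in dimension two the Oseledets spectrum of a volume-preserving map at such a point is $\{\Lambda(x),-\Lambda(x)\}$, Pesin's entropy formula (valid for $C^2$, volume-preserving surface diffeomorphisms) gives $h_{\Leb}(f)\ge\int_{\cI_f}\Lambda\,d\Leb>0$, which proves the proposition.

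First I would recover the structure of the unperturbed model on $\cI=\cI_{f^\D}$ from \cref{S1,S2,S3,S4,S5}: up to the analytic conjugacy of \cite{DM90}, $f^\D$ is obtained from the linear Anosov map $A$ on $\T^2$ by blowing up the four fixed points $\cP$, taking a $2$-to-$1$ quotient, and recapping three of the four resulting holes. Consequently, on $\cI\setminus\bigcup_iU_i$, where $U_i$ is a small neighborhood of the $i$-th pair $\{P_i,Q_i\}$ of boundary saddles (the hyperbolic continuations of $\check P_i,\check Q_i$), $f^\D$ is a local quotient of the linear map $A$ and therefore carries a continuous, $Df^\D$-invariant unstable cone field $\mathcal C$ with $\|Df^\D v\|\ge\sigma\|v\|$ for all $v\in\mathcal C$ and some fixed $\sigma>1$. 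Inside each $U_i$, by \cref{def:Hbis} and \cref{euclidean coordinate}, $f^\D$ is the time-one map of an integrable Hamiltonian possessing a genuine hyperbolic saddle, and an explicit computation in the $(\theta,r)$ coordinates shows that a vector in $\mathcal C$ following an excursion through $U_i$ of length $\tau$ has its norm multiplied by at least $e^{c\lambda\tau}$; the only orbits escaping this bound are those asymptotic to a saddle, i.e. those in the $\Leb$-null set $\bigcup_iW^s(Q_i;f^\D)$. I would then check that, after slightly fattening $\mathcal C$, all of this survives for every conservative $f$ that is $C^2$-close to $f^\D$ and keeps the bi-links: the hyperbolic continuations $P_i,Q_i$ and their local invariant manifolds move continuously, strict expansion of the cone outside $\bigcup_iU_i$ is an open $C^1$ condition, and inside $U_i$ the perturbed map is a $C^1$-small perturbation of the model saddle along which the bi-link is pinned as an invariant curve, so the excursion estimate degrades only quantitatively.

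Finally I would close the argument by a Borel--Cantelli/recurrence step in the spirit of \cite{Ka79,Pr82}: the set of $x\in\cI_f$ whose forward orbit is eventually trapped in $\bigcup_iU_i$ is contained in $\bigcup_iW^s(Q_i;f)$ and hence $\Leb$-null, so for $\Leb$-almost every $x$ the forward orbit alternates infinitely often between complete excursions through the $U_i$ and stretches outside them; multiplying the per-step expansion $\sigma$ accumulated outside by the excursion lower bounds $e^{c\lambda\tau}$ yields $\liminf_n\tfrac1n\log\|D_xf^nv\|\ge c'\lambda>0$ for every $v\in\mathcal C(x)$, whence $\Lambda(x)>0$. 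Note that only the four ingredients used above — invariance of $\cI_f$, continuity of the hyperbolic continuations, strict cone expansion outside $\bigcup_iU_i$, and the robust excursion estimate — involve the perturbation, and each follows from the hypotheses of the proposition.

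The main obstacle is the analysis inside the $U_i$: establishing, uniformly over the admissible $C^2$-perturbations, a positive lower bound for the time-averaged expansion of $\mathcal C$ along orbits that pass arbitrarily close to the degenerate boundary saddles, where the unstable direction becomes tangent to the bi-link and the instantaneous hyperbolicity rate momentarily collapses. Controlling these excursions — both the lower bound on the accumulated expansion and the logarithmic bound on the sojourn time in $U_i$ — is where the explicit normal-form coordinates set up in \cref{S3,S4,S5} and the pinning of the heteroclinic links are indispensable; this is the technical core of \cite[prop.~2.1]{BT19}.
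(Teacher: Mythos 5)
The paper does not prove \cref{KPAP} at all: it imports the statement wholesale from \cite[prop.~2.1]{BT19}, so there is no in-paper argument to compare your sketch against. What you wrote is a faithful high-level reconstruction of the Katok--Przytycki cone-field strategy that underlies the cited result: invariance of the bounded region from persistence of the bi-links, a robust $Df$-invariant unstable cone expanded uniformly away from the boundary saddles, an excursion estimate near the saddles where the bi-link pinning neutralizes the cumulative effect of the $C^2$-small perturbation over the arbitrarily long sojourn times, and the conclusion that $\Lambda>0$ off the $\Leb$-null set $\bigcup_i W^s(Q_i;f)$. You also correctly flag that the quantitative excursion control near the saddles is exactly the technical content of \cite[prop.~2.1]{BT19}, which you do not (and cannot, in a sketch) reproduce; so what you have is an outline consistent with, not a replacement for, the cited reference.

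Two small remarks. First, the appeal to Pesin's entropy formula is superfluous here: the paper explicitly \emph{defines} metric entropy as $\int\Lambda\,d\Leb$, so a stochastic island gives positive metric entropy by definition, without invoking Pesin (which would require a discussion of ergodic components and $C^{1+\alpha}$ regularity). Second, the invariance of $\cI_f$ does not follow from orientation preservation alone (a priori $f$ could permute homeomorphic complementary components); what you actually need, and do implicitly use, is $C^2$-closeness to $f^\D$ together with continuity of the bi-link continuations, which forces $f$ to fix each component of the complement of the four circles because $f^\D$ does.
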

\subsection{Proof of  \cref{coro isotopy}}\label{SS4}
It remains  the case of mappings isotopic to pseudo-Anosov maps. 
To carry them we will need to generalize the construction of cap to prove:
\begin{proposition}\label{generalized cap}Let $U\subset \R^2$ be a neighborhood of $0$.  Let  $f: U\to \R^2$ be an analytic and symplectic map with $0$ as a saddle fixed point with positive eigenvalues. Then there exist $\rho>\rho'>0$ arbitrarily small, an 
analytic and symplectic map  $\phi: \D (\rho)\setminus \D(\rho')\to \D (\sqrt{\rho^2-\rho'^2})\setminus \{0\}$ such that on:
\[\hat U:= (U\setminus \{0\})\sqcup \D(\rho)/\sim \quad 
\text{with } U\ni u\sim d\in \D(\rho) \text{ iff } u=\phi(d)\; ,\]
 the map $f|U\setminus \{0\}$ extends to a symplectic and analytic map $\hat f$ on $\hat U$ which leaves invariant the disk $ \D(\rho')$ and on which its   restriction   is integrable and displays three elliptic fixed points. \end{proposition}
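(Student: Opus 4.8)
The plan is to repeat, essentially verbatim, the blow-up of \textsection\ref{S1}, the explicit collar of \textsection\ref{S3}, and the surgery and blow-down of \textsection\ref{S4}--\textsection\ref{S5}, with the explicit pendulum $H$ replaced by the analytic normal form of $f$ at the saddle and with the $2$-to-$1$ quotient of \textsection\ref{S2} suppressed. The one genuinely new input is Moser's analytic normal form \cite{Mo56}: since $0$ is a saddle fixed point of the analytic symplectic map $f$ with positive eigenvalues $\mu>1>\mu^{-1}$, there are analytic symplectic coordinates $(x,y)$ near $0$ in which $f=\phi^1_K$ is the time-one map of the Hamiltonian flow of an analytic function $K(x,y)=g(xy)$, with $g$ analytic, $g(0)=0$ and $g'(0)=\log\mu$; here $\Omega=dx\wedge dy$, the stable and unstable manifolds of $0$ are the coordinate axes, the orbits of $K$ lie on the hyperbolas $\{xy=\mathrm{cst}\}$, and $K$, $\Omega$ and the flow $\phi^t_K$ all extend analytically to a genuine, unpunctured neighbourhood of $0$.

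First I would symplectically blow up $0$ to a circle $C=\R/2\Z\times\{0\}$ by the map $\pi_P$ of \textsection\ref{S1}; in the blow-up coordinates $(\theta,r)$ the Hamiltonian $K$ becomes $\mathcal H(\theta,r)=g\!\left(\tfrac r\pi\sin 2\pi\theta\right)$, so $f$ is the time-one map of the flow of $\mathcal H$. On $C$ one has $\mathcal H\equiv0$ and this flow has four fixed points, at $\theta\in\{0,\tfrac12,1,\tfrac32\}$, two of unstable and two of stable type along $C$, joined by four heteroclinic arcs — a ``quadri-link'', rather than the bi-link of \textsection\ref{S3}: this is the sole effect of dropping the quotient of \textsection\ref{S2}, whose involution would have identified these four points in pairs. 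As in \textsection\ref{S3} one extends the dynamics analytically across $C$ to $r<0$ and shapes the ``outer half'' $W^-=\{(\theta,r):r\in[-\eta,0],\ |\mathcal H|\le\eta^3\}$, a cylinder pinched into four thin ``legs'' near $r=-\eta$, around $\theta=0,\tfrac12,1,\tfrac32$.

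Then I would apply the surgery of \textsection\ref{S4} twice, each time gluing a one-sided collar of an unstable leg's tip to the opposite-sided collar of a stable leg's tip, using in both cases the identification obtained, exactly as in \textsection\ref{S4}, by conjugating the flow $\phi^t_{\mathcal H}$ along the involution $\theta\mapsto\tfrac12-\theta$. The crucial observation is that this involution still preserves $\mathcal H$ after the pendulum has been replaced by $g\!\left(\tfrac r\pi\sin 2\pi\theta\right)$ — it only uses $\sin(\pi-\cdot)=\sin(\cdot)$ — so the gluing is again analytic, symplectic, $\mathcal H$-respecting and orientation-reversing on the $1$-skeleton, and the quotient $W^-/\cR$ is an oriented analytic surface carrying the descended $\Omega$ and $\mathcal H$. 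Running the Euler-characteristic and boundary-circle bookkeeping of \textsection\ref{S4} twice identifies $W^-/\cR$ with a $4$-holed sphere, one of whose boundary circles is $C$; on each of the other three $\mathcal H$ is constant with non-vanishing symplectic gradient, so by \cref{euclidean coordinate} one may pass to action--angle coordinates there and blow them down, as in \textsection\ref{S5}, to three non-degenerate elliptic fixed points. The outcome is an analytic symplectic disk — the generalized cap — with boundary $C$, on which the dynamics is the time-one flow of the descended $\mathcal H$, hence integrable, and which displays three elliptic fixed points. Gluing this cap along $C$ to the blow-up of $U$ at $0$ and adjoining a collar of $C$ on the $U$-side — a punctured round neighbourhood of $0$ in $U$, normalised to $\D(\sqrt{\rho^2-\rho'^2})\setminus\{0\}$ — one obtains the disk $\D(\rho)$, with $\D(\rho')$ the cap and $\D(\rho)\setminus\D(\rho')$ the collar; the identification of the collar with $\D(\sqrt{\rho^2-\rho'^2})\setminus\{0\}\subset U$ is the required map $\phi$, and $\hat U=(U\setminus\{0\})\sqcup\D(\rho)/\!\sim$ then carries the sought analytic symplectic extension $\hat f$ of $f|_{U\setminus\{0\}}$, leaving $\D(\rho')$ invariant.

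There is no deep new obstacle: the argument is \textsection\ref{S1} and \textsection\ref{S3}--\textsection\ref{S5} run with $g$ composed onto the pendulum and with \textsection\ref{S2}'s quotient suppressed, so that the quadri-link forces the surgery to be performed twice and produces three elliptic fixed points instead of two. The points that require genuine care are: checking that Moser's normal form indeed applies (this is where the hypothesis of positive eigenvalues enters) and tracking analyticity through it; verifying, as indicated, that the involution $\theta\mapsto\tfrac12-\theta$ still yields an analytic symplectic $\mathcal H$-respecting surgery for $\mathcal H=g\!\left(\tfrac r\pi\sin 2\pi\theta\right)$; and redoing the topological bookkeeping for two successive surgeries to confirm that the quotient is a $4$-holed sphere — equivalently, that exactly three elliptic fixed points, and no spurious degenerate ones, appear after the blow-downs. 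Everything else — the explicit collar of \textsection\ref{S3}, the action--angle blow-down of \textsection\ref{S5} through \cref{euclidean coordinate}, and the final gluing — is routine.
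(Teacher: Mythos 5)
Your proposal is correct and takes essentially the same route as the paper: Moser's normal form, symplectic blow-up, analytic extension of the Hamiltonian flow past the exceptional circle, the doubled surgery, and three blow-downs via \cref{euclidean coordinate} (the paper packages the two surgeries and the three blow-downs into \cref{Generalized cap} with $k=2$). The only cosmetic difference is that you realize the gluing by the explicit $H$-preserving involution $\theta\mapsto\tfrac12-\theta$, whereas the paper's more general lemma parametrizes the arcs $\Sigma_{out,i}$, $\Sigma_{in,i}$ by their $H$-values; for $\mathcal H(\theta,r)=g\bigl(\tfrac r\pi\sin 2\pi\theta\bigr)$ the two identifications agree.
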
  

\begin{proof}[Proof of \cref{coro isotopy} for  $f$ isotopic to a pseudo-Anosov map.] 
Let $(S, \Omega)$ be a symplectic orientable, closed surface. Then by \cite{GK82,Ge85},   any orientation preserving pseudo-Anosov isotopy class is represented by  an analytic and symplectic map   $f$ on $S$. 
\begin{lemma}
The map $f$  displays a  hyperbolic periodic cycle $(P_i)_{i\in \Z_q}$ with positive eigenvalues.\end{lemma}
\begin{proof}
As $f$ has positive topological entropy, it displays a horseshoe \cite{Ka80} with at least two rectangles. There are two possibilities:
Either one of these  rectangles is not rotated by the induced dynamics, and so we get immediately a saddle  periodic cycle with positive eigenvalues. Or both rectangle are rotated by a half turn. Then we can compose the induced dynamics by these two rectangles to obtain a  hyperbolic periodic cycle with positive eigenvalues.
\end{proof}
Let $U_0$ be neighborhood of $P_0$  and let $U_i:= f^{i}(U_0)$ for every $i$. 
 We assume $U_0$ small enough so that $U_0\cup U_q$ can be identified to a  subset of $\R^2$. Then each $U_i$, $1\le i\le q-1$, can also be identified to subset of $\R^2$ using the diffeomorphism $f^{q-i}: U_i\to U_q$. In these identifications,
$U_1\equiv\cdots\equiv U_{q-1}\equiv U_q$,  $f|U_i\to U_{i+1}$ is the identity for $1\le i\le q-1$ and $f|U_{0}\equiv f^q|U_0$. 
 
We apply \cref{generalized cap} to $f^q|U_0$. This blows up $P_0$ to a disk, and so $U_0$ and $U_q$ to $\hat U_0$ and $\hat U_q$.
Furthermore we can lift $f: U_0\to U_1$ to $\hat f: \hat U_0\to \hat U_1$ so that $\hat f^q|\hat U_0$ displays three elliptic islands.
  Using the above  identifications, we  also blow up each $U_1\equiv\cdots\equiv U_{q-1}\equiv U_q$ to $ \hat U_1\equiv\cdots\equiv \hat U_{q-1}\equiv\hat U_q$. We lift each $f|U_i\equiv id$ to $\hat f|\hat U_i\equiv id$ for $1\le i\le q-1$.   

All these blow-ups can define a blow up $\hat S$ of $S$ along the orbit $(P_i)_{i\in \Z_q}$, and a lifting $\hat f$ of $f$ which displays the sougth properties. 
\end{proof} 

\begin{proof}[Proof of \cref{generalized cap}]
By \cite{Mo56}, there exists analytic and symplectic coordinates of a  neighborhood  of $0$ for which the dynamics is of the following form with $\lambda$ an analytic function:
\[f (x,y)= (\exp(\lambda(x\cdot y)) \cdot x, \exp(-\lambda(x\cdot y))\cdot y), \quad \text{with } \lambda(x\cdot y) >0\; .\]
Let $\Lambda$ be an integral of the function $\lambda$ so that $\Lambda(0)=0$. 
Note that $f$ is the time one of the flow of the Hamiltonian  $H_1:(x,y)\mapsto \Lambda (x\cdot y)$.  Let us  follow the same lines as in \cref{S3,S4,S5}. The difference is that the function $  \lambda  $ here is not constant. Also we will not perform the quotient $\R/2\Z\times \R\to \R/\Z\times \R$. Hence   basically, we will construct a 2-covering of the previous~cap.

First, we perform a symplectic and analytic  blow-up of $0$.   
For $\epsilon>0$ small, let $r_\epsilon:= \sqrt{\tfrac2\pi \epsilon}$. 
The $r_\epsilon$-neighborhood $\D(r_\epsilon)$ of $0$ is blown up to an annulus  via the map:
\[\pi_0:    (\theta,r)\in \R/2\Z \times [0, \epsilon  ]  \mapsto   (\sqrt{\tfrac{2r}\pi} \cos (\pi \theta), \sqrt{\tfrac{2r}\pi} \sin (\pi \theta))\in  \D(r_\epsilon)\; .\] 
This blow up defines a symplectic and analytic  annulus $\D^*$ as :
\[\D^*:= \left(\D(r_\epsilon)\setminus \{0\}\right) \sqcup   \R/2\Z \times [0, \epsilon ]/\pi_0\; .\] 
Note that $H_1$ lifts to $\R/2\Z \times [0, \epsilon ] $ as the mapping:
\[H: (\theta, r)\mapsto \Lambda(\tfrac r\pi \sin(2\theta))\]
Observe that $\Omega= d\theta\wedge dr$ and  $H$ extend canonically to $V:= \R/2\Z \times (-\epsilon, \epsilon )$. 
On  $C= \R/2\Z\times \{0\}$, the dynamics  displays four saddle fixed points $Q_i=(\tfrac i 2,0)$ with $i\in \Z/4\Z$, so that $ \bigcup_{i=1}^2 W^s(Q_{2i})\cup \{Q_{2i+1}\} = 
\bigcup_{i=1}^2 W^u(Q_{2i+1})\cup \{Q_{2i}\}= C$. 
 In particular $C$ consists of four  heteroclinic links. Thus we can   conclude by applying the next lemma with $k=2$. \end{proof}
\begin{lemma}[Generalized cap] \label{Generalized cap}
Let $H$ be an analytic Hamiltonian defined on a neighborhood $V$ of $C= \R/2\Z\times \{0\}$ in $\R/\Z\times \R$  such that $C $ is an union of $2k$-heteroclinic links: $C= \bigcup_{i\in \Z_k} W^s(Q_{2i}) \cup \{Q_{2i+1}\}= \bigcup_{i\in \Z_k} W^u(Q_{2i+1}) \cup \{Q_{2i}\}$. 
Then there exists there exists $\delta>0$ and an analytic map $\phi$ from $\R/2\Z\times (-\delta,0]$ to a neighborhood of $\partial \D$ in $\bar \D$ such that $H\circ \phi^{-1}$ extends to an analytic function on $\bar \D$ which displays exactly $k+1$ critical points in $\D$  with definite Hessian. 
\end{lemma}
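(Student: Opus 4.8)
The plan is to reproduce the surgeries of \textsection\ref{S3}--\ref{S5} with the two heteroclinic links replaced by the $2k$ links of $C$, performing $k$ surgeries instead of one and then $k+1$ blow-downs. First, set up the one-sided neighborhood. Work on $V^-:=V\cap(\R/2\Z\times\R_-)$. By Moser's normal form \cite{Mo56}, near each saddle $Q_j$ there are analytic symplectic coordinates $(x,y)$ in which $H$ is a function of $xy$; in particular $Q_j$ has an analytic separatrix branch $\gamma_j$ running into $\{r<0\}$ --- the downward unstable branch when $j$ is even, the downward stable branch when $j$ is odd. Since $dH$ vanishes on $V^-$ only at the $Q_j$ and $H\equiv0$ on $C\cup\bigcup_j\gamma_j$, for $\eta,c>0$ small the set
\[W^-:=\{(\theta,r)\in\R/2\Z\times[-\eta,0]:\ |H(\theta,r)|\le c\}\]
is an analytic surface: a collar of $C$ together with $2k$ fingers, one along each $\gamma_j$, truncated at $r=-\eta$. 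As in \textsection\ref{S4}, $\partial W^-=C\sqcup\Gamma$, where $\Gamma$ is a single circle made of $2k$ arcs $\Sigma_0,\dots,\Sigma_{2k-1}\subset\{r=-\eta\}$ near the feet of the $\gamma_j$, alternating with the $2k$ components $\beta_0,\dots,\beta_{2k-1}$ of $\{|H|=c\}\cap\partial W^-$ ($\beta_j$ lying between $\gamma_j$ and $\gamma_{j+1}$, joining an end of $\Sigma_j$ to an end of $\Sigma_{j+1}$). Thus $W^-$ is an annulus, and --- after relabelling --- $H\equiv-c$ on $\beta_j$ for $j$ even, $H\equiv+c$ for $j$ odd, since the sign of $H$ in $\{r<0\}$ alternates between consecutive sectors.

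For each $i\in\Z_k$ let $\rho_i$ be the reflection of $\R/2\Z$ fixing the midpoint of the arc of $C$ between $Q_{2i}$ and $Q_{2i+1}$, which swaps $\gamma_{2i}$ and $\gamma_{2i+1}$. Exactly as for the map $\phi$ of \textsection\ref{S4} (with $\rho_i$ replacing $\theta\mapsto\tfrac12-\theta$), define an analytic symplectic map $\phi_i$ respecting $H$, between flow-saturated neighborhoods of $\Sigma_{2i}$ and of $\Sigma_{2i+1}$, equal to $\phi^1_H\circ\rho_i$ on $\Sigma_{2i}$; for $\eta$ small the $\phi_i$ have pairwise disjoint supports. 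Let $\cR$ be the equivalence relation they generate; as in \textsection\ref{S4}, $W^-/\cR$ is an analytic symplectic surface with boundary, carrying an analytic push-forward of $H$. Each $\phi_i$ glues two disjoint arcs of $\Gamma$ reversing the orientation they induce, so it lowers $\chi$ by one, pinches $\beta_{2i}$ off into a closed curve, and joins the free end of $\beta_{2i-1}$ to that of $\beta_{2i+1}$. After the $k$ surgeries, $W^-/\cR$ is an orientable genus-zero surface with $\chi=-k$ and $k+2$ boundary circles: $C$; the $k$ circles obtained by pinching off $\beta_0,\beta_2,\dots,\beta_{2k-2}$, each with $H\equiv-c$; and one circle formed by $\beta_1,\beta_3,\dots,\beta_{2k-1}$ joined cyclically by the $k$ identifications, with $H\equiv+c$. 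Hence $W^-/\cR=\bar\D\setminus(\D_0\sqcup\cdots\sqcup\D_k)$.

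On each $\partial\D_j$ the function $H$ is constant with non-vanishing symplectic gradient, so by \cref{euclidean coordinate} there is an analytic symplectic action--angle chart near $\partial\D_j$ in which $H$ depends only on the action; hence $\partial\D_j$ can be blown down to a point $P_j$ near which $H=h_j(x^2+y^2)$ with $h_j$ analytic and $h_j'(0)\ne0$, a non-degenerate elliptic fixed point. Performing the $k+1$ blow-downs turns $W^-/\cR$ into $\bar\D$ with an extended analytic $H$ whose only critical points are the $2k$ saddles $Q_\ell\in\partial\bar\D$ and the points $P_0,\dots,P_k\in\D$, the latter with definite Hessian. For $\delta\in(0,c)$ small, the collar $\R/2\Z\times(-\delta,0]$ of $C$ in $W^-$ avoids every $\Sigma_j$, every $\beta_j$, and every support of a $\phi_i$, so it embeds into $W^-/\cR$ and then into $\bar\D$ onto a neighborhood of $\partial\D$; letting $\phi$ be this embedding, $H\circ\phi^{-1}$ is the restriction of the $H$ just built on $\bar\D$, which therefore extends it analytically and has exactly $k+1$ critical points in $\D$, all with definite Hessian.

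The step I expect to be the real obstacle is the topological bookkeeping of the $k$ surgeries: checking that they can be made mutually disjoint and that together they cut $W^-$ into $\bar\D$ minus exactly $k+1$ disks --- the $k$ ``even'' dips pinching off into $k$ separate blow-down circles, and the $k$ ``odd'' dips, which all carry the common value $+c$, assembling into exactly one further blow-down circle suitable for \cref{euclidean coordinate}. Granting \textsection\ref{S3}--\ref{S5} and \cref{euclidean coordinate}, the analytic and symplectic nature of each local surgery, the normal-form reduction near the saddles, and the blow-down itself are then routine.
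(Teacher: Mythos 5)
Your proposal follows the same overall strategy as the paper: build the one-sided collar $W^-$, glue $\Sigma_{2i}$ to $\Sigma_{2i+1}$ to obtain $\bar\D$ minus $k+1$ disks, then blow down via \cref{euclidean coordinate}. Your topological bookkeeping (Euler characteristic $\chi=-k$, counting the $k+2$ boundary circles, identifying that the even dips $\beta_{2i}$ pinch off individually while the odd dips chain into one circle) is correct and actually spelled out more carefully than in the paper, where it is asserted without computation.

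There is, however, one genuine gap, and it is exactly at the point you flag as ``routine'': the construction of the gluing maps $\phi_i$ via the reflections $\rho_i$. You take $\rho_i$ to be the reflection of the $\theta$-coordinate about the midpoint of the arc of $C$ from $Q_{2i}$ to $Q_{2i+1}$, and set $\phi_i=\phi_H^1\circ\rho_i$ on $\Sigma_{2i}$, asserting that this extends to an analytic symplectic map respecting $H$. For the model Hamiltonian of \textsection\ref{S4}, $H=\tfrac\lambda\pi r\sin(2\pi\theta)$, this works because $\theta\mapsto\tfrac12-\theta$ is an exact symmetry of $H$, which is what makes the S4 map $\phi$ preserve $H$ and (after the time shift) be symplectic. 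But in \cref{Generalized cap}, $H$ is an arbitrary analytic Hamiltonian with the given heteroclinic structure; there is no reason for it to be invariant under any $\theta$-reflection, so in general $\rho_i$ neither preserves $H$, nor carries the separatrix $\gamma_{2i}$ to $\gamma_{2i+1}$, nor carries $\Sigma_{2i}$ to $\Sigma_{2i+1}$. Consequently the map $\phi_H^1\circ\rho_i$ does not respect $H$ (so the quotient Hamiltonian is not well defined) and is not symplectic, and the rest of the argument does not go through as stated. The paper sidesteps this by not using any symmetry at all: since $H|\Sigma_{out,i}$ and $H|\Sigma_{in,i}$ are each analytic diffeomorphisms onto $[-\eta^3,\eta^3]$, one parametrizes both arcs by their $H$-value $x\in[-\eta^3,\eta^3]$ and defines the gluing to send the point reaching $\Sigma_{out,i}$ at time $t$ with $H=x$ to the point that entered through $\Sigma_{in,i}$ at time $1-t$ with the same $H=x$. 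This automatically respects $H$, and a direct computation in flow-box coordinates $(t,x)$ (in which $\Omega=\pm\,dt\wedge dx$ and the two collars inherit opposite signs because one is parametrized by time-to-exit and the other by time-since-entrance) shows it is symplectic. Replacing your $\phi_i=\phi_H^1\circ\rho_i$ by this $H$-level-set matching repairs the argument; everything else in your proof then stands.
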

\begin{proof} We depict the construction for $k=2$ in \cref{cap surgeryS} [left-center]. For $k=1$, this lemma implies \textsection \ref{S4} and \ref{S5}; its proof is similar. 
 \begin{figure}[h]
\begin{center}
\includegraphics[width=17cm]{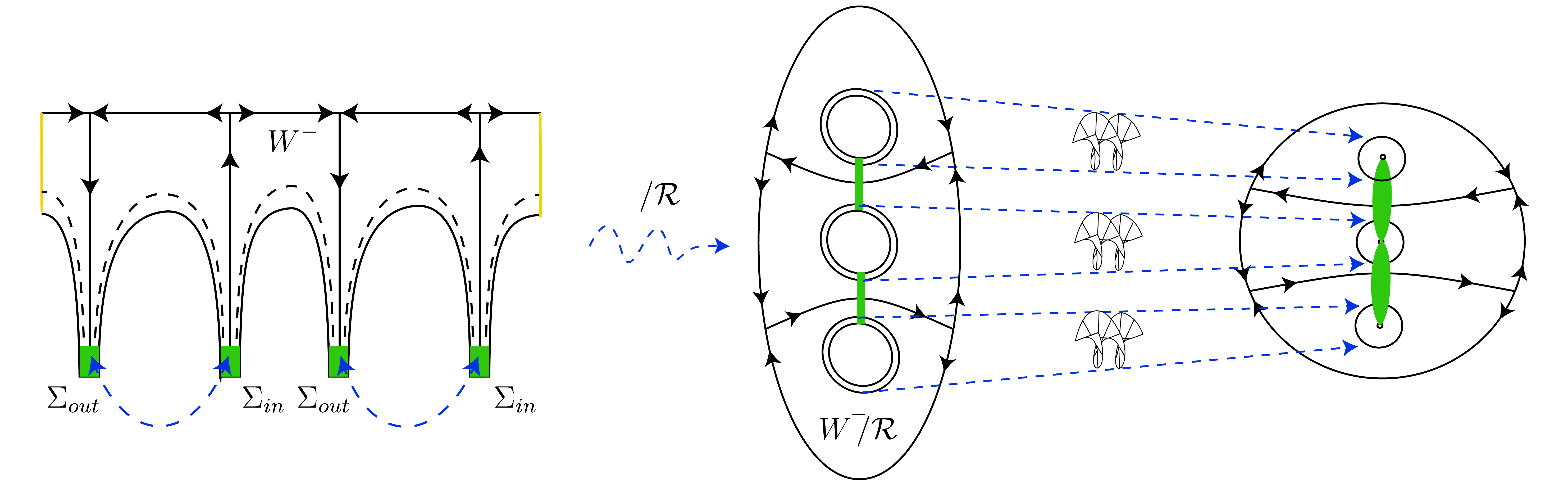}
\caption{Making an integrable generalized cap by surgery with $k=2$.} \label{cap surgeryS}
\end{center}
\end{figure}

On $C$ the function $H$ must be constant; let us assume it equal to $0$.  For $\eta>0$ small, we define:
\[W^-:= \{(\theta,r)\in \R/2\Z\times 
[-\eta, 0]:
|H(\theta,r)|\le \eta^3\qand |r| \le \eta\}\; .\] 
%We recall that $H(r,\theta)= \Lambda (\tfrac{r}\pi \sin(2\pi\theta))$ with $\Lambda(0)=0$ and $\Lambda'=\lambda$.  
 The boundary of $W^-$ is made by $4k+1$  curves (see \cref{cap surgery}[left]);  $2k$ of them form:
\[\Sigma:=\{\theta\in \R/2\Z: 
|H(\theta,r)|\le \eta^2 \}\times \{-\eta\}\; .\]
For every $ i\in \Z_k$, let $\Sigma_{out,i}$ be the component of  $\Sigma$  which intersects $W^u_{2\eta}(Q_{2i})$ and   $\Sigma_{in,i}$ be the component of  $\Sigma$  which intersects $W^s_{2\eta}(Q_{2i+1})$. On each $\Sigma_{out,i}$, the restriction $H|\Sigma_{out,i}$ is a diffeomorphism onto $[-\eta^3,\eta^3]$. Thus there is a canonical parametrization of $\Sigma_{out}$ with  $[-\eta^3,\eta^3]\times \Z_k$. Likewise  there is a canonical parametrization of $\Sigma_{in}$ with  $[-\eta^3,\eta^3]\times \Z_k$.  Let $\phi_H^t$ the Hamiltonian flow of $H$. 
 To perform the surgery depicted in \cref{cap surgeryS} [left-center],  we define:
\[\psi:   \bigcup_{t\in [0,1]} \phi_H^{-t}(\Sigma_{out})\to  \bigcup_{t\in [0,1]} \phi_H^t(\Sigma_{in})\; ,\]
such that  for  $t\in [0,1]$, the point $(\theta,r)=\phi_H^{-t}(\theta_0, r_0)$ with   $(r_0,\theta_0)\in \Sigma_{out}$ parametrized by  $(x,i)\in [-\eta^3,\eta^3]\times \Z_k$, is sent by $\psi$ to $\phi_{H}^{1-t}(\theta'_0, r'_0)$ with $(r'_0,\theta'_0)\in \Sigma_{in}$ parametrized by $(x,i)\in [-\eta^3,\eta^3]\times \Z_k$. 
 Note that the map $\psi$   is  analytic and  symplectic. Moreover it respects $H$.  
 Let $W^-/\cR $ be equal to $W^-$ quotiented by the equivalence relation $\cR$  spanned by:
  \[(r,\theta)\cR  (r',\theta')\quad  \text{if} \quad ( r,  \theta)= \psi( r', \theta'),\quad \forall 
 (r,\theta)\in  \bigcup_{t\in [0,1]} \phi_H^{-t}(\Sigma_{out})
\text{ and }
  (r',\theta')\in  \bigcup_{t\in [0,1]} \phi_H^t(\Sigma_{in})\; 
  .\] 
  Also as $\phi$ is an analytic diffeomorphism which
sends the intersection of its domain with $\partial W^-\setminus  \Sigma$  into $\partial W^-\setminus  \Sigma$,  the space  $W^-/\cR$  is an analytic surface with boundary. Moreover, we notice that $W^-/\cR$ is equal to the closed  disk $\bar \D$ without $k+1$  disks $(\D_i)_{0\le i\le k}$   as depicted in \cref{cap surgeryS} [center]:
\[W^-/\cR=  \bar \D \setminus (\bigcup_{i=0}^k \D_i)\]
This surgery respects the symplectic form $\Omega$ and the Hamiltonian $H$, which remains analytic on $W/\cR$ that we still denote  by $H$.    
 
Now we would like to   blow-down   the circles  $ \partial \D_i $ to fixed points $P_i$.   These blow-downs construct $k+1$ disks depicted in  \cref{cap surgeryS} [Center-Right].  In order to do so, we first observe that on $  \bigsqcup_i \partial \D_i$, the function $H$ is locally constant (equal to respectively $\eta^3$,  $-\eta^3$ and $\eta^3$) but $D H$ does not vanish on these circles. So we can apply \cref{euclidean coordinate}. For every $i$, it gives the existence of  a neighborhood $V_i$ of $\partial\D_i$ endowed with analytic and symplectic coordinate $\psi_\sigma: V_i\to \R/\Z\times [0,\delta]$  such that  $H\circ \psi^{-1}_i(\theta,r)= h_i(r)$  for an analytic map $h_i$.   So we can perform a blow-down of the circle $\partial \D_i$ to a fixed point  $P_i$. This blow-down sends $V_i$ to a disk of radius $\sqrt{ \delta/\pi}$ and on this disk
 the continuation of the   Hamiltonian is equal to $ P_i+(x,y)\mapsto h_i( x^2+y^2)$. 
As the unique critical points of $H|W^-$ were $(Q_i)_{i\in \Z_{2k}}$, these surgeries creates only $k+1$-new critical points at $P_i$ which are all with definite Hessian.    
\end{proof}
 
\bibliographystyle{alpha}
\bibliography{references}

\def\polhk#1{\setbox0=\hbox{#1}{\ooalign{\hidewidth
  \lower1.5ex\hbox{`}\hidewidth\crcr\unhbox0}}} \def\cprime{$'$}
  \def\cprime{$'$} \def\cprime{$'$} \def\cprime{$'$} \def\cprime{$'$}
  \def\cprime{$'$} \def\cprime{$'$} \def\cprime{$'$}
  \def\polhk#1{\setbox0=\hbox{#1}{\ooalign{\hidewidth
  \lower1.5ex\hbox{`}\hidewidth\crcr\unhbox0}}}
\begin{thebibliography}{Mos56}

\bibitem[AP09]{AP09}
A.~Arroyo and E.~Pujals.
\newblock ${C}^k$-robust transitivity for surfaces with boundary.
\newblock {\em arXiv preprint arXiv:0904.2561}, 2009.

\bibitem[Arn19]{Ar19}
M.-C Arnaud.
\newblock La d\'emonstration de la conjecture de l'entropie positive d'{H}erman
  d'apr\`es {B}erger et {T}uraev.
\newblock In {\em S{\'e}minaire Bourbaki}, 2019.

\bibitem[BT19]{BT19}
P.~Berger and D.~Turaev.
\newblock On {H}erman's positive entropy conjecture.
\newblock {\em Adv. Math.}, 349:1234--1288, 2019.

\bibitem[DM90]{DM90}
B.~Dacorogna and J.~Moser.
\newblock On a partial differential equation involving the {J}acobian
  determinant.
\newblock {\em Ann. Inst. H. Poincar\'e Anal. Non Lin\'eaire}, 7(1):1--26,
  1990.

\bibitem[Ger85]{Ge85}
M.~Gerber.
\newblock Conditional stability and real analytic pseudo-{A}nosov maps.
\newblock {\em Mem. Amer. Math. Soc.}, 54(321):iv+116, 1985.

\bibitem[GK82]{GK82}
M.~Gerber and A.~Katok.
\newblock Smooth models of thurston's pseudo-anosov maps.
\newblock {\em Annales scientifiques de l'\'Ecole Normale Sup\'erieure}, Ser.
  4, 15(1):173--204, 1982.

\bibitem[Gor12]{Go12}
A.~Gorodetski.
\newblock On stochastic sea of the standard map.
\newblock {\em Comm. Math. Phys.}, 309(1):155--192, 2012.

\bibitem[Her98]{He98}
M.~Herman.
\newblock Some open problems in dynamical systems.
\newblock In {\em Proceedings of the {I}nternational {C}ongress of
  {M}athematicians, {V}ol. {II} ({B}erlin, 1998)}, number Extra Vol. II, pages
  797--808, 1998.

\bibitem[Kat79]{Ka79}
A.~Katok.
\newblock Bernoulli diffeomorphisms on surfaces.
\newblock {\em Ann. of Math. (2)}, 110(3):529--547, 1979.

\bibitem[Kat80]{Ka80}
A.~Katok.
\newblock Lyapunov exponents, entropy and periodic orbits for diffeomorphisms.
\newblock {\em Inst. Hautes \'Etudes Sci. Publ. Math.}, (51):137--173, 1980.

\bibitem[Liv04]{Li04}
C.~Liverani.
\newblock Birth of an elliptic island in a chaotic sea.
\newblock {\em Math. Phys. Electron. J.}, 10:Paper 1, 13, 2004.

\bibitem[Mos56]{Mo56}
J.~Moser.
\newblock The analytic invariants of an area-preserving mapping near a
  hyperbolic fixed point.
\newblock {\em Communications on Pure and Applied Mathematics}, 9(4):673--692,
  1956.

\bibitem[Prz82]{Pr82}
F.~Przytycki.
\newblock Examples of conservative diffeomorphisms of the two-dimensional torus
  with coexistence of elliptic and stochastic behaviour.
\newblock {\em Ergodic Theory Dynam. Systems}, 2(3-4):439--463 (1983), 1982.

\end{thebibliography}
\vskip 5pt

\begin{tabular}{l }
\emph{\normalsize Pierre Berger}\\
\small Institut de Math\'ematiques de Jussieu-Paris Rive Gauche, \\
CNRS,  Sorbonne Universit\'e, Paris, France \\
\small \texttt{pierre.berger@imj-prg.fr}\\

\end{tabular} 
\end{document}